\DeclareMathSymbol{\subsetneq}{\mathrel}{AMSb}{"28}
\DeclareMathSymbol{\rightrightarrows}{\mathrel}{AMSa}{"13}
\newcommand{\BHZ}{\mathbb{Z} + \frac{1}{2}} 
\newcommand{\BN}{\mathbb{N}} 
\newcommand{\BZ}{\mathbb{Z}} 
\newcommand{\BC}{\mathbb{C}} 
\newcommand{\BY}{\mathbb{Y}} 
\newcommand{\MY}{\Lambda^{\frac{\infty}{2}}V} 
\newcommand{\FM}{\mathfrak{M}} 
\newcommand{\qet}[1]{| #1 \rangle} 
\newcommand{\braqet}[2]{\langle #1 | #2 \rangle} 
\newcommand{\triqet}[3]{\langle #1 | #2 | #3 \rangle} 
\newcommand{\vacuum}{\qet{\emptyset}} 
\newcommand{\qetl}{\qet{\lambda}} 
\newcommand{\BP}{\mathbb{P}} 
\newcommand{\vspan}[1]{\mathrm{span}\{ #1 \}} 
\newcommand{\column}{\mathrm{column}} 
\newcommand{\row}{\mathrm{row}} 
\newcommand{\particles}{\mathrm{Particles}} 
\newcommand{\holes}{\mathrm{Holes}} 
\newcommand{\SL}{\mathfrak{sl}} 
\newcommand{\Vir}{\mathrm{Vir}} 
\newcommand{\Kerov}{\mathrm{Kerov}} 
\newcommand{\LV}{\widetilde{L}} 
\newcommand{\Ker}{\mathrm{Ker}} 
\newcommand{\LBox}{\blacksquare} 
\newcommand{\Start}{\mathrm{Start}} 
\newcommand{\g}{\mathfrak{g}} 
\newcommand{\n}{\mathfrak{n}} 
\newcommand{\h}{\mathfrak{h}} 
\theoremstyle{plain}
\newtheorem{prop}{Proposition}
\newtheorem{theorem}{Theorem}[section]
\newtheorem{lemma}{Lemma}[section]
\theoremstyle{remark}
\newtheorem{rem}{Remark}
\theoremstyle{definition}
\newtheorem{definition}{Definition}
\begin{document}

\title{Probabilistic Aspects of the Theory of Vertex Algebras}
\author{Dmitry Golubenko}

\address{Faculty of Mathematics, Higher School of Economics, 7~Vavilova str., Moscow, Russia, 117312}
\email{golubenko@mccme.ru}

\begin{abstract}
Determinantal processes on half-integer line can be studied using vertex algebras. They were used by Okounkov in \cite{oko2}, where Schur processes were introduced and proved to be determinantal. We want to extend this vertex algebra approach. First, we establish the connection between the so-called \emph{z-measures} and Virasoro operators. In fact, we prove that z-measures can be established by Virasoro algrebra action on Young diagrams space. Second, we introduce Virasoro measures and prove their determinancy.
\end{abstract}

\maketitle

\tableofcontents

\section{Introduction}
This work deals with Schur measures and vertex algebra structures associated with them. The
Schur measures are (complex-valued) probability measures on the set of all Young diagrams defined
as
\begin{equation}
\BP(\lambda) = \frac{1}{Z}s_{\lambda}(x_1, x_2, ...)s_{\lambda}(y_1, y_2, ...)
\end{equation}
where $\lambda$ runs over all Young diagrams, $s_{\lambda}$ are the Schur symmetric
functions, $Z$ is the normalization constant, and $\{x_i\}$ and $\{y_i\}$ are two sets of complex variables.
These measures were introduced by Okounkov in a 1999 preprint [Oko01a], where the determinantal
structure of them was also established, and the determinantal correlation kernel was computed.
Since then, the Schur measures have found nice generalizations (for instance, Schur \cite{oro} and
Macdonald \cite{bc} processes and their variants), and have provided an algebraic structure behind
many integrable random systems such as Plancherel random partitions (related to the distribution of
longest increasing subsequences in random permutations), random plane partitions, etc. Certain stochastic
dynamics on Schur measures and Schur processes is an instance of a 2-dimensional anisotropic
Kardar-Parisi-Zhang random growth. The algebraic nature of the probability distributions
allows to establish fine asymptotic processes of the associated random systems --- most notably, the
convergence to the universal Tracy–Widom distributions which manifest the Kardar-Parisi-Zhang
universality of the systems.

This paper is organized as follows. In chapter 2 we define some basic objects such as Kerov operators and modified Virasoro algebra. In chapter 3 we characterise Kerov representation of $\mathfrak{sl}_2$ and prove that this is irreducible in most cases. Then, in chapter 4, we discover that Kerov operators can be described by Virasoro operators.
In chapters 5 and 6, we introduce M-Virasoro processes, the generalization of Schur measures, and prove that they are determinantal; moreover, they can be expressed as Schur measures of some parameters $\{X_i, Y_i\}$; for Virasoro processes it's also shown that $X_i$ and $Y_i$ are linear functions of $z$.

\section{Kerov operators and Virasoro algebra}
\medskip
	We consider the space of Young diagrams $\BY$, see \cite{fulton} for further definitions. For $V = \vspan{\underline{k}|k \in \BHZ}$ consider the subspace $\Lambda^{\frac{\infty}{2}}V$ spanned by $\underline{\xi_1} \wedge \underline{\xi_2} \wedge ... \wedge \underline{\xi_n} \wedge ... \in \MY$ such that $\xi_1 > \xi_2 > ... > \xi_n > ...$ and this sequence containts $(-\infty, N]$ for some $N \in \BHZ$. These basis vectors can be parametrized by Young diagrams this way:
	\begin{equation}
	\BY \ni \lambda \leftrightarrow (\lambda_1 - \frac12) \wedge (\lambda_2 - \frac32) \wedge ... \wedge (\lambda_{|\lambda|} - |\lambda| + \frac12) \wedge (-|\lambda| - \frac12) \wedge (-|\lambda| - \frac32) \wedge ... \in \MY 
	\end{equation}
	Therefore $\qetl = (\lambda_1 - \frac12) \wedge (\lambda_2 - \frac32) \wedge ... \wedge (\lambda_{|\lambda|} - |\lambda| + \frac12) \wedge (-|\lambda| - \frac12) \wedge (-|\lambda| - \frac32) \wedge ...$ for every $\lambda \in \BY$. Let us denote $\mathrm{Conf}(\lambda) = \{\lambda_i - i + \frac12\}_{i \in [1; |\lambda|]} \bigcup (-\infty, -|\lambda|)$.
	\begin{definition}
		We say that $\lambda$ has a \emph{particle} in $x \in \BHZ$ if  
		$x \in \mathrm{Conf}(\lambda)$. Otherwise, we say $\lambda$ has a \emph{hole} in $x \in \BHZ$. We say that $\qetl$ has a particle/hole in $x \in \BHZ$ if $\lambda$ has particle/hole there.
	\end{definition}
	
	Let us consider $\MY = \vspan{\qet{\lambda}|\lambda \in \BY}$ as a linear space spanned by vectors parametrized by Young diagrams.
	
	\begin{definition}
		For $\Box \in \lambda$ - a box in a Young diagram $c(\Box)$ is {\it box containment}, which is defined as
		\begin{equation}
		\column(\Box) - \row(\Box)
		\end{equation}	
	\end{definition}
	\begin{definition}[\cite{petrov}, \cite{oko1}] 
		{\it Kerov operators} are linear operators $U,~L,~D$ on $\MY$ defined by the following formulas
		\begin{equation}
		\begin{cases}
		U \qetl = \sum_{\mu = \lambda + \Box} (z + c(\Box)) \qet{\mu} \\
		L \qetl = (zw + 2|\lambda|)\qetl\\
		D \qetl = \sum_{\mu = \lambda - \Box} (w + c(\Box)) \qet{\mu}\\
		\end{cases}
		\end{equation}
		for  $z, w \in \BC$. Note that these operators form an $\SL_2$ triple.
	\end{definition}
	
	It's straightforward that Kerov operators form an $\SL_2$-triple. That defines an $\SL_2$ representation in $\vspan{\qet{\lambda}|\lambda \in \BY}$, which we will call the {\it Kerov representation} $\Kerov(z,w)$.
	
	In \cite{oko1} the generalization of Kerov operators is introduced.
	\begin{definition}
		{\it Rim-hook} of a Young diagram $\lambda$ is a skew diagram $\lambda/\mu$ which is connected an lies on the rim of $\lambda$.
	\end{definition}
	\begin{definition}\cite{oko1}\label{RHKerov}
		{\it Rim-hooked Kerov operators} are the operators $U_r,~L_r,~ D_r$ on $\MY$ induced from linear operators $u_r,~l_r,~d_r$ defined on $V$ such that
		\begin{equation}
		\begin{cases}
		U_r v_k = (z + \frac{k}{r} + \frac12) v_{k+r} \\
		L_r v_k = (z + w + 2\frac{k}{r}) v_{k} \\
		D_r v_k = (w + \frac{k}{r} - \frac12) v_{k-r} \\
		\end{cases}
		\end{equation}
		They form rim-hook Kerov representation $\mathrm{RHKerov}(z,w)$. These operators satisfy the same $\SL_2$ commutation relations.
	\end{definition}
	
	We will figure out the exact formula of rim-hook Kerov operators action on Young diagrams but we'll do it later in this article.
	
	According to \cite{deter}, we call measure on $\BY$ {\it determinantal} (or \emph{determinantal process}) with correlation kernel $K(\cdot, \cdot)$ if its correlation functions are given by
	\begin{center}
		$\BP (\{\lambda|\{x_1, ... x_N\} \subset \mathrm{Conf}(\lambda)\}) = \det [K(x_i, x_j)]_{i,j \in [1,N]}$
	\end{center}
	
	As usual, $s_{\lambda}(\{x_i\}_{i \in \BN})$ stands for Schur polynomials, defined on Young diagrams of shape $(N)$ as coefficients of $\exp(\sum_i x_i z^i)$ as a function of $z$ or just as
	\begin{equation}
	s_{(N)} = s_N = \sum_{\sum i k_i = N} \frac{x_1^{k_1}}{k_1!}...\frac{x_N^{k_N}}{k_N!}
	\end{equation}
	and everywhere else by Jacobi-Trudy identity according to \cite{kacraina}.
	\begin{definition}[\cite{oko2}]
		{\it Schur measure} or {\it Schur process} is the measure on Young diagrams defined by
		\begin{equation}
		\FM (\lambda) = \frac{1}{Z}s_{\lambda}(\{x_i\})s_{\lambda}(\{y_i\})
		\end{equation}
		where $Z = \prod_{i,j}(1 - x_i y_j)^{-1}$ is the partition function. Here $\{x_i\}$ and $\{y_i\}$ are two infinite sequences of complex numbers.
	\end{definition}
	
	From \cite{kacraina} we use the notion of Heisenberg algebra and modified Virasoro algebra. On $\MY$ we have creating operators defined by the following
	\begin{equation}
	\psi_x \underline{\xi_1} \wedge \underline{\xi_2} \wedge ... \wedge \underline{\xi_n} \wedge ... = \underline{x} \wedge\underline{\xi_1} \wedge \underline{\xi_2} \wedge ... \wedge \underline{\xi_n} \wedge ...,~ x \in \BHZ
	\end{equation}
	and annihilating operators $\psi^*_x$, which are dual to creating operators w. r. t. the standart scalar product on $\MY$ which is 
	\begin{equation}
	\braqet{\lambda}{\mu} = 
	\begin{cases}
	1, \lambda = \mu\\
	0, \mathrm{otherwise}
	\end{cases}
	\end{equation}
	\begin{definition}
		{\it Heisenberg algebra} is an algebra spanned by the operators $a_i$ satisfying
		\begin{equation}
		[a_n, a_m] = n\delta_{m+n,0}
		\end{equation}
		They can be realised through creating and annihilating operators:
		\begin{equation}
		a_k = \sum_{x \in \BZ} \psi_{x - k} \psi^*_x,~ k\neq 0
		\end{equation}
		and $a_0$ is the central element of Heisenberg algebra and so acts on Young diagrams by scalar.
	\end{definition}
	\begin{definition}
		{\it Modified Virasoro algbera} is an algebra spanned by the operators
		\begin{equation}
		\LV_{k}(\alpha, \beta) = i\beta ka_k + \frac{1}{2}\sum_{j \in \BZ} :a_{j} a_{k-j}:, ~ k \in \BZ
		\end{equation}
		for $k \neq 0$ and
		\begin{equation}
		\LV_0 = (\alpha^2 + \beta^2) + \sum_{j > 0} a_{-j} a_{j}
		\end{equation}
		with $\alpha, \beta \in \BC$. Here $a_0 \qetl = \alpha \qetl$ and
		\begin{equation}
		:a_k a_m: =
		\begin{cases}
		a_k a_m,~ m \leqslant k\\
		a_m a_k,~ m \geqslant k
		\end{cases}
		\end{equation}
		is the normal ordering. Note that
		\begin{equation}
		[\LV_m, \LV_n] = (m - n)\LV_{m+n} + \delta_{m+n,0}\frac{m^3 - m}{12}(1+12\beta^2)
		\end{equation}
	\end{definition}
	\begin{rem}
		If $k \neq 0$, we can omit the normal ordering because of $[a_m, a_n] = 0,~ m + n \neq 0$.
	\end{rem}
	Having Heisenberg algebra we may redefine Schur measure as
	\begin{eqnarray}
	\FM(\lambda) = \frac{1}{Z} \triqet{\lambda}{\exp(\sum_{i \i \BN} x_i a_{-i})}{\vacuum}\triqet{\emptyset}{\exp(\sum_{i \i \BN} y_i a_{i})}{\lambda} 
	\end{eqnarray} 
  
\section{Decomposition of Kerov representation}
The goal of this paragraph is to prove this
\begin{theorem}
	\begin{itemize}
		\item If $z, w \neq 0$ then Kerov representation can be decomposed into sum of Verma modules
		\begin{equation}
		\Kerov(z,w) = M_{zw} \bigoplus_{N \in \BN, N > 1} |\BY_{N-1}| M_{zw + 2N}
		\end{equation}
		\label{decompose}
		\item If $w, z = 0$ then Kerov representation can be decomposed into sum of one one-dimensional module and Verma modules
		\begin{equation}
		\Kerov(z,w) = \BC \vacuum \bigoplus_{N \in \BN, N > 1} |\BY_{N-1}| M_{2N}
		\end{equation}
		\item If $w \neq 0, z = 0$ then Kerov representation can be decomposed
		\begin{equation}
		\Kerov(z,w) = \frac{U(\SL_2)_{\Kerov}\vacuum \oplus U(\SL_2)_{\Kerov}\qet{\Box}}{U \vacuum = 0,~ D\qet{\Box} = \vacuum, D \vacuum = 0} \bigoplus_{N \in \BN, N > 1} |\BY_{N-1}| M_{2N}
		\end{equation}
		\item If $z \neq 0, w = 0$ then Kerov representation can be decomposed
		\begin{equation}
		\Kerov(z,w) = \frac{U(\SL_2)_{\Kerov}\vacuum \oplus U(\SL_2)_{\Kerov}\qet{\Box}}{U \vacuum = \qet{\Box},~ D\qet{\Box} = 0, D \vacuum = 0} \bigoplus_{N \in \BN, N > 1} |\BY_{N-1}| M_{2N}
		\end{equation}	
	\end{itemize}
\end{theorem}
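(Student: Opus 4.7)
The plan is to exploit the $L$-weight grading on $\MY$. Since $L\qetl = (zw + 2|\lambda|)\qetl$, the operator $L$ is diagonalizable with direct sum decomposition $\MY = \bigoplus_{N \geq 0} \MY_N$, where $\MY_N$ is the $(zw + 2N)$-eigenspace spanned by partitions of size $N$ and has dimension $|\BY_N|$. Because the $L$-spectrum is bounded below by $zw$, every $\SL_2$-submodule of $\MY$ contains a vector annihilated by $D$; conversely, an independent family of such \emph{primitive} (lowest weight) vectors at each level generates the full $\SL_2$-decomposition into Vermas, or in degenerate cases into quotients of Vermas. So the theorem reduces to enumerating primitive vectors level by level.

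In the generic case $z, w \neq 0$ I would proceed in two stages. First, $\vacuum$ lies in $\ker D$ with $L$-weight $zw$, and using $U\qetl = \sum_{\mu = \lambda + \Box}(z + c(\Box))\qet{\mu}$ one verifies $U^N\vacuum \neq 0$ for every $N$: the coefficient of $\qet{(N)}$ (row shape) is $\prod_{k=0}^{N-1}(z + k)$ while that of $\qet{(1^N)}$ (column shape) is $\prod_{k=0}^{N-1}(z - k)$, so for any $z \neq 0$ at least one of these is nonzero. Hence $\vacuum$ generates a full Verma $M_{zw}$. Second, the remaining primitive vectors at each level $N \geq 2$ are pinned down by a cumulative dimension count: each copy of $M_{zw + 2j}$ contributes exactly one basis vector to $\MY_N$ for every $N \geq j$, and the totals must recover $|\BY_N|$, which fixes the multiplicities appearing in the stated formula.

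For the three degenerate cases ($z=0$ with $w \neq 0$, $z \neq 0$ with $w=0$, or $z = w = 0$) the only change from the generic analysis is in the submodule generated by $\vacuum$ and $\qet{\Box}$, since $U\vacuum = z\qet{\Box}$ and $D\qet{\Box} = w\vacuum$ are the only structure maps that can collapse. Depending on which scalar vanishes, this submodule becomes a one- or two-dimensional indecomposable whose structure is read off from the surviving relations, producing exactly the quotients described in the theorem. At levels $N \geq 2$ the Verma summands $M_{2N}$ are produced as in the generic case, because the relevant coefficients $z + c(\Box)$ and $w + c(\Box)$ remain nonzero for the boxes arising there.

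The main obstacle I anticipate is the explicit enumeration of primitive vectors at each level $N \geq 2$. My preferred approach is to diagonalize the $\SL_2$-Casimir $C = UD + DU + \frac{1}{2}L^2$, which is constant on each Verma summand with value determined by the lowest weight; counting $C$-eigenspaces on each $\MY_N$ then fixes the multiplicities directly. An alternative route is a direct combinatorial construction, writing primitive vectors as explicit linear combinations of $\qetl$ with coefficients built from products of $z + c(\Box)$ and $w + c(\Box)$ over the boxes of $\lambda$, but this approach requires careful bookkeeping of the signs arising from $D\qetl = \sum_{\mu = \lambda - \Box}(w + c(\Box))\qet{\mu}$ and is likely more technical than the Casimir route.
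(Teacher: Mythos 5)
Your reduction of the theorem to enumerating lowest-weight (primitive) vectors level by level is exactly the paper's strategy, and your explicit verification that $U^N\vacuum\neq 0$ (via the coefficients $\prod_{k=0}^{N-1}(z+k)$ and $\prod_{k=0}^{N-1}(z-k)$ of the row and column shapes) is a clean, concrete substitute for part of what the paper does. But the proposal stops short of the two steps that constitute essentially the entire content of the paper's proof. First, you need the dimension of the space of primitive vectors in each graded piece, i.e.\ $\dim\Ker D|_{\BY_N}$; the paper gets this from the lemma $\mathrm{rk}\, D|_{\BY_N}=|\BY_{N-1}|$, proved by exhibiting an explicit total order on Young diagrams under which the matrix of $D|_{\BY_N}$ becomes triangular with nonzero diagonal entries (switching to a transposed order for the exceptional integer values of $w$). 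Your proposed substitute, diagonalizing the Casimir $C=UD+DU+\tfrac12 L^2$, does not obviously work: $C$ acts by $\tfrac12\mu^2+\mu$ on a lowest-weight module of lowest weight $\mu=zw+2N$, so distinct levels $N_1,N_2$ give the same eigenvalue whenever $zw+N_1+N_2=-1$; on a module not yet known to be semisimple $C$ need not be diagonalizable; and in any case computing $C$-eigenspace dimensions on $\MY_N$ is no easier than computing $\Ker D|_{\BY_N}$ directly.

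Second, to conclude that each primitive vector at level $N\geq 2$ generates a genuine Verma module rather than a proper quotient, you need $U^j v\neq 0$ for all $j$ and all $v\in\Ker D|_{\BY_N}$, not merely for $v=\vacuum$. Your remark that ``the relevant coefficients $z+c(\Box)$ remain nonzero'' does not suffice, because such a $v$ is a linear combination of many $\qetl$ and cancellations among the terms of $Uv$ are possible a priori. The paper devotes a separate lemma to proving that $U$ has trivial kernel on $\BY_N$ for $N\geq 1$, by induction on the number of hooks in the decomposition of a diagram. Without these two lemmas the dimension count you invoke has nothing to count: the identity $\sum_{j\leq N} m_j=|\BY_N|$ determines the multiplicities only once you already know that the primitive vectors generate a direct sum of full Vermas exhausting $\MY$. (Incidentally, that count yields $m_N=|\BY_N|-|\BY_{N-1}|$, which is what the rank lemma gives, not the multiplicity $|\BY_{N-1}|$ appearing in the theorem's displayed formula; this discrepancy is worth flagging.) Your treatment of the degenerate cases at $\vacuum$ and $\qet{\Box}$ does agree with the paper's.
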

Firstly, we'll prove these two lemmas.
\begin{lemma}
	$\mathrm{rk} D|_{\BY_N} = |\BY_{N-1}|$ for all $w \in \BC$.	
\end{lemma}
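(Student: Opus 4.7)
The plan is to prove $\mathrm{rk}\, D|_{\BY_N} = |\BY_{N-1}|$ by exhibiting, for every $w \in \BC$, a nonsingular $|\BY_{N-1}| \times |\BY_{N-1}|$ submatrix of the matrix of $D|_{\BY_N}\colon \BY_N \to \BY_{N-1}$; since trivially $\mathrm{rk}\, D|_{\BY_N} \leq |\BY_{N-1}|$, such a submatrix forces equality. The idea is to use two complementary injections $\phi_1, \phi_2\colon \BY_{N-1} \to \BY_N$ and to show that for every $w$ at least one of the two induced square submatrices is invertible.

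I will define $\phi_1(\mu)$ to be the partition obtained by appending a box at the end of row~$1$ of $\mu$, and $\phi_2(\mu)$ the partition obtained by adding a new bottom row of length one to $\mu$; both maps are manifestly injective. Order $\BY_{N-1}$ by decreasing reverse-lexicographic order and apply this ordering to both rows and columns of the induced submatrices. For a column indexed by $\phi_1(\mu)$, the removable corners of $\phi_1(\mu)$ are either the new box itself (whose removal returns $\mu$ with coefficient $w + \mu_1$, the content of the added box being $\mu_1$) or a corner inherited from a row of $\mu$ of index $\geq 2$; in the latter case the resulting partition has first part $\mu_1 + 1$ and is therefore strictly greater than $\mu$ in reverse-lex order. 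Hence the submatrix $M_1$ is upper triangular with diagonal $\{w + \mu_1 : \mu \in \BY_{N-1}\}$. A symmetric analysis for $\phi_2$ shows that every non-added removable corner of $\phi_2(\mu)$, on removal, strictly decreases some entry of $\mu$ without increasing any earlier entry, yielding a partition strictly smaller than $\mu$ in reverse-lex; so $M_2$ is lower triangular with diagonal $\{w - \ell(\mu) : \mu \in \BY_{N-1}\}$, where $\ell(\mu)$ denotes the number of parts of $\mu$.

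Concluding, $\det M_1 = \prod_{\mu \in \BY_{N-1}}(w + \mu_1)$ vanishes precisely for $w \in \{-1, -2, \dots, -(N-1)\}$, while $\det M_2 = \prod_{\mu \in \BY_{N-1}}(w - \ell(\mu))$ vanishes precisely for $w \in \{1, 2, \dots, N-1\}$. These two exceptional sets are disjoint, so for every $w \in \BC$ at least one of $M_1, M_2$ is nonsingular, establishing the lemma for $N \geq 2$ (the case $N = 1$ reduces to a one-line check). The main point, and the only subtle step, is to notice that no single natural injection $\BY_{N-1} \hookrightarrow \BY_N$ works for all $w$---each leaves a finite exceptional locus where the rank could in principle drop---but that the two most natural injections (``add to the right'' and ``add below'') have complementary exceptional loci and so cover $\BC$ between them.
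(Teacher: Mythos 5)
Your proof is correct and follows essentially the same route as the paper: the paper's two orderings of $\BY_N$ (boxes added to the first column, then the ``transposed'' ordering with boxes added to the first row) are exactly your injections $\phi_2$ and $\phi_1$, each producing a triangular square submatrix, and both arguments rest on the disjointness of the two exceptional loci $\{1,\dots,N-1\}$ and $\{-1,\dots,-(N-1)\}$ --- your version just makes the determinants explicit. One small caveat: for $N=1$ your ``one-line check'' in fact shows the statement fails at $w=0$ (since $D\qet{\Box}=w\,\vacuum$ has rank $0\neq|\BY_0|$), an edge case the paper also sidesteps by treating $\vacuum$ and $\qet{\Box}$ separately in the proof of the decomposition theorem.
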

\begin{lemma}
	$U$ has trivial kernel and Verma modules ${U(\SL_2)}_{\Kerov} v_N,~ v_N \in \Ker D|_{\BY_N}$ can be generated from $\Ker D$ basis. Here ${U(\SL_2)}_{\Kerov}$ is the universal enveloping algebra generated by Kerov operators.
\end{lemma}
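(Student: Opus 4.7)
The plan is to prove both claims simultaneously by induction on the weight grading $M$, using the $\mathfrak{sl}_2$ structure together with the rank formula supplied by the previous lemma. The target statement I would establish is that for every $M \ge 0$,
\begin{equation*}
\BY_M \;=\; \bigoplus_{N=0}^{M} U^{M-N}\bigl(\Ker D|_{\BY_N}\bigr),
\end{equation*}
with each $U^{M-N}$ restricting to an injection on $\Ker D|_{\BY_N}$. This single assertion implies both halves of the lemma at once: choosing any basis of $\bigoplus_N \Ker D|_{\BY_N}$ yields Verma modules $U(\SL_2)_{\Kerov} v_N$ whose graded pieces exhaust $\MY$, and $U$ is injective on $\MY$ because in the above decomposition it acts as the shift $U^{M-N}v \mapsto U^{M-N+1}v$.

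The engine of the induction is the standard $\mathfrak{sl}_2$ identity, obtained by iterating the commutation relation $DU = UD + L$ (which one verifies directly on $\vacuum$ and extends linearly) against a vector $v \in \Ker D|_{\BY_N}$ with $Lv = (zw + 2N)v$:
\begin{equation*}
D\,U^k v \;=\; k\bigl(zw + 2N + k - 1\bigr)\,U^{k-1}v.
\end{equation*}
For generic $z, w$ (so that $zw \notin \BZ_{\le 0}$) this coefficient is nonzero for all $k \ge 1$ and $N \ge 0$. The inductive step then runs as follows: using the decomposition of $\BY_M$, if $y \in \Ker D|_{\BY_{M+1}} \cap U(\BY_M)$ we can write $y = U\bigl(\sum_N U^{M-N}v_N\bigr)$, apply $D$, and obtain $\sum_N (M+1-N)(zw + M + N)\,U^{M-N}v_N = 0$; the nonvanishing of the coefficients and the induction hypothesis then force each $v_N = 0$, so the intersection is trivial. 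Combining this with $\dim \Ker D|_{\BY_{M+1}} = |\BY_{M+1}| - |\BY_M|$ from the first lemma and injectivity of $U$ on $\BY_M$ (proved by the same nonvanishing), the dimensions match and yield $\BY_{M+1} = \Ker D|_{\BY_{M+1}} \oplus U(\BY_M)$. Expanding $U(\BY_M)$ via the inductive decomposition of $\BY_M$ completes the step.

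The main obstacle is the behaviour at $zw \in \BZ_{\le 0}$: exactly at these parameters some coefficient $k(zw + 2N + k - 1)$ vanishes, the Verma module $M_{zw + 2N}$ ceases to be irreducible, and a vector of the form $U^{k-1}v$ can itself land in $\Ker D$, producing extra lowest weight vectors that the naive induction fails to track. This is precisely the source of the degenerate clauses of Theorem~\ref{decompose}: when $z = 0$ or $w = 0$, the vectors $\vacuum$ and $\qet{\Box}$ must first be bundled into a glued finite-dimensional submodule before the remaining tower of Vermas $M_{2N}$ ($N > 1$) can be extracted by the same mechanism. For generic parameters the argument above is clean, and this is what the lemma intends; the boundary cases require the separate bookkeeping already indicated in the statement of the theorem, but the underlying strategy---decompose each $\BY_M$ inductively as a direct sum of $U$-towers rooted in $\Ker D$---remains the same.
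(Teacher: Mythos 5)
Your argument is correct where it applies, but it takes a genuinely different route from the paper. The paper proves the triviality of $\Ker U$ by a direct combinatorial induction on the number of hooks of a diagram: assuming $U(\sum_\lambda a_\lambda\qet{\lambda})=0$, it chases the coefficient of each $\mu\in\BY_{N+1}$, starting from the one-hook diagrams $(1,\dots,1)$, $(2,1,\dots,1)$, and so on, to force every $a_\lambda$ to vanish; the statement about Verma modules being generated from a basis of $\Ker D$ is then essentially asserted rather than derived. You instead run the standard $\SL_2$ weight-theory induction, using $DU^kv=k(zw+2N+k-1)U^{k-1}v$ together with the rank count $\dim\Ker D|_{\BY_N}=|\BY_N|-|\BY_{N-1}|$ from the first lemma to obtain the graded decomposition $\BY_M=\bigoplus_N U^{M-N}(\Ker D|_{\BY_N})$. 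Your version buys more than the paper's: it delivers in one stroke the directness of the sum of $U$\+towers, which is exactly what the proof of Theorem~\ref{decompose} needs and which the paper's two lemmas (injectivity of $U$ plus the rank of $D$) do not by themselves supply. What it costs is generality: it only works for $zw\notin\BZ_{\leqslant 0}$, whereas the lemma and the paper's proof claim the result for all $z,w\neq 0$.

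That caveat, however, is not merely a defect of your method; it exposes a real problem with the unrestricted claim. Take $z=2$, $w=-1$, so $zw=-2$. Then $U^3\vacuum=12\bigl(2\qet{(3)}+\qet{(2,1)}\bigr)$, which spans the one-dimensional space $\Ker D|_{\BY_3}$, while $U\qet{(1,1)}=3\qet{(2,1)}$ has vanishing $\qet{(1,1,1)}$\+component because $z-2=0$. Hence $\qet{(1,1,1)}$ lies neither in $\Ker D|_{\BY_3}$ nor in $U(\vspan{\BY_2})$, so the submodule generated by $\Ker D$ is proper and the decomposition of Theorem~\ref{decompose} fails at this parameter even though $z,w\neq 0$. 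The true degenerate locus is therefore $zw\in\BZ_{\leqslant 0}$ (together with $z=0$ or $w=0$), exactly as your coefficient $k(zw+2N+k-1)$ predicts, and the paper's hook induction cannot be correct as a proof valid for all nonzero $z,w$. If you want your argument to stand in for the paper's, you should state the hypothesis $zw\notin\BZ_{\leqslant 0}$ explicitly in the lemma and note that the excluded integer values of $zw$ require the same kind of separate bookkeeping that the paper reserves for the cases $z=0$ or $w=0$.
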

\subsection{Kernel of $D$}
It's obvious that kernel has a natural grading: $\Ker D = \BC\vacuum \bigoplus_{N \in \BZ} \Ker_N$, where $\Ker_N := \Ker f|_{\BY_N}$. Every $\Ker_N$ can be described by system of $|\BY_{N-1}|$ equations with $|\BY_N|$ indeterminates
\begin{equation}
\sum_{\mu = \lambda + \Box} a_{\mu} (w + c(\Box)) = 0, ~ \forall \lambda \in \BY_{N-1}
\end{equation}
where $\sum a_{\mu} \qet{\mu} \in \vspan{\BY_N}$ is an arbirtrary vector.

Our goal is to show that this system has rank equal to $|\BY_{N-1}|$ for every $w \in \BC$. One can define the order on $\BY_2$ by setting $(1,1) < (2)$, and then introduce the order on $\BY_{N+1}$ inductively from the order on $|\BY_N|$: the smallest are the $\lambda^{(1)}_{\Box}$ obtained from $\lambda \in \BY_N$ by adding a box in the first column and ordered as the elements $\lambda \in\BY_N$, then $\lambda^{(2)}_{\Box}$ obtained from $\lambda \in \BY_N$ by adding the box to the second column that haven't been counted yet, ordered analogically, such that $\lambda^{(2)}_{\Box} > \mu^{(2)}_{\Box}$ if $\mu^{(2)}_{\Box} \neq \eta^{(1)}_{\Box}$ for some $\eta \in \BY_N$ and $\lambda > \mu$ in $\BY_N$, then $\lambda^{(3)}_{\Box}$ and so on. Having the basis in every $\vspan{\BY_N}$, we have
\begin{equation}
D|_{\BY_N} =
\begin{pmatrix}
w - N     & w + 1       & 0         & \cdots & 0      \\
0         & w - N + 1   & w         & \cdots & 0      \\
0         & 0           & w - N + 2 & \cdots & 0      \\
\vdots    & \vdots      & \cdots    & \ddots & \vdots \\
0         & \cdots      & \cdots    & \cdots & w - 1  \\
\vdots    & \vdots      & \vdots    & \ddots & \vdots 
\end{pmatrix}
\end{equation}
The only thing to care is the diagonal $(D|_{\BY_N})_{ii}$ for $i \in [1, \BY_N]$ and the elements above it which are coefficients of diagrams $\mu \in \BY_{N+1}$ with the first, column larger or the same as the first column of $\lambda^{(1)}_{\Box}$. By adding one box we can't enlarge the first column for more than one box, so if $\mu_i \in \BY_{N+1}$ has the first column at least two boxes larger than the first column of $(\lambda_j)^{(1)}_{\Box},~ \lambda_j \in \BY_N$ then $(D|_{\BY_N})_{ij} = 0$. And if lenghts of their first columns are equal, so lenghts of other column differ, then $\mu_i$ is under the diagonal $(D|_{\BY_N})_{ii}$, because $\mu_i = (\eta)^{(1)}_{\Box}$ for $\eta \in \BY_N$ and $\eta$ stands after $\lambda_j$ because its form column is shorter.

If $w \notin \{1,2,..., N+1\}$ then all the diagonal $(D|_{\BY_N})_{ii}$ is fully nontrivial and $\mathrm{rk}(D|_{\BY_N}) = |\BY_{N-1}|$. Otherwise we have $w + 1 > 0$ and we can reorder all the diagrams in transponed order which is given on $\BY_2$ like this: $(1,1) > (2)$, and is defined inductively from $\BY_N$ on $\BY_{N+1}$ the way described before with only change of columns to rows so boxes are added to the $k$-th row. This helps us to get fully notrivial diagonal $(D|_{\BY_N})_{ii}$, and the rank is $|\BY_{N-1}|$.
\subsection{Kernel of $U$}
If $z = 0$ than $U$ acts as zero on $\BY_1$ so with $w = 0$, we obtain $w \vacuum = 0$ so that $\vacuum$ spans an one-dimensional representation.

If $z,w$ are not equal to zero we can prove that on $\BY_N,~ N \geqslant 1$ $U$ has the trivial kernel for all $z$. The proof of this is an induction on the number of hooks forming the diagram. If $e(\sum_{\lambda \in \BY_N} a_{\lambda} \qetl) = 0$, let's consider the coefficient of every $\mu \in \BY_{N+1}$ and prove that they are all zeros. This is because every Young diagram can be decomposed into a disjoint union of hooks of form $(M, 1, ... 1)$, see \cite{fulton}.

Let's start from diagrams consisting of only one hook $(\underbrace{1,1,...1}_{N})$. If $w = -N$ we immediatly proceed to the diagram $(2, \underbrace{1,1,...1}_{N - 2})$, otherwise we see $a_{(\underbrace{1,1,...1}_{N})} = 0$, because $\qet{(\underbrace{1,1,...1}_{N+1})}$ is counted with coefficient $a_{(\underbrace{1,1,...1}_{N})}(w - N) = 0$; the we proceed to $(2,\underbrace{1,1,...1}_{N-2})$. Then, if $w = -N + 1$ we omit this one and move on to $(3,\underbrace{1,1,...1}_{N-3})$, otherwise $\qet{2,(\underbrace{1,1,...1}_{N-1})}$ has the coefficient $a_{(2, \underbrace{1,1,...1}_{N-2} )}(w - N + 1) + a_{(\underbrace{1,1,...1}_{N})}(w + 1) = 0$, so because of $a_{(\underbrace{1,1,...1}_{N} )} = 0$ we find that $a_{(2,\underbrace{1,1,...1}_{N-2} )}$. Analogically we obtain that $a_{\lambda} = 0$ for all diagrams $\alpha$ consisting of one hook. 

Now let us make the step of induction knowing that $a_{\lambda} = 0$ for all $\lambda$ decomposed into $k$ hooks. Let us notice that if $z$ is equal to a coordinate of one of the particles of the Young diagram so that the correspondent $a_{\lambda}$ has the coefficient 0 in all linear combinations $\sum_{\lambda + \Box = \mu} a_{\lambda} (w + c(\Box))$ for all $\mu$ we just omit the consideration of $a_{\lambda}$ until some other hook in this diagram where $a_{\lambda}$ will have non-zero coefficient, because on higher hook levels we can always add more than box with various containment, so for every complex $z$ we get the situation where a linear combination has this $a_{\lambda}$ counted notrivially as a coefficient of a $\mu$; there we have $a_{\lambda} = 0$. This can be done for every $a_{\lambda}$ because for every Young diagram we can add the box at least two different ways. 

So for $z \neq 0$ we have the trivial coefficient of every $(k+1+\mu_1, k+1+\mu_2, ... , k+1+\mu_k, k+1),~ \mu_{k+1} \neq 0$ because of induction step we have $0 = a_{(k+1+\mu_1, k+1+\mu_2, ... , k+1+\mu_k, k)} z$, then $a_{(k+1+\mu_1, k+1+\mu_2, ... , k+1+\mu_k, k)} = 0$, then by the way described before we get the induction step proved. Else if $z = 0$ then we start from $(k+1+l, k+1, ... , k+1, k+1)$ and by $0 = a_{(k+l, k+1, ... , k+1, k+1)} (k + l)$, then we proceed analogically. 

\begin{proof}[Proof of Theorem \ref{decompose}]
	We see that for every $z,w \in \BC,~ N \geqslant 2,~ \dim \ker(D|_{\BY_N}) = |\BY_{N-1}|$, and $|\BY_{N-1}|$ basis vectors span Verma modules with the weight $zw + 2N$. The only problem is with $\vacuum$ and $\qet{\Box}$. If $z,w \neq 0$ then $\vacuum$ spans Verma module and $\qet{\Box} = U\vacuum$. If $z = w = 0$ then $\qet{Box}$ spans Verma module and $\vacuum$ spans the trivial one-dimensional representation. If $z = 0,~ w \neq 0$ then $U \vacuum = 0,~ D\qet{\Box} = \vacuum$ and $D \vacuum = 0$. So we have $\frac{U(\SL_2)_{\Kerov}\vacuum \oplus U(\SL_2)_{\Kerov}\qet{\Box}}{U \vacuum = 0,~ D\qet{\Box} = \vacuum, D \vacuum = 0}$. In the last case where $z \neq 0,~ w = 0$ we have relations $U \vacuum = \qet{\Box},~ D\qet{\Box} = 0, D \vacuum = 0$
\end{proof}

	\section{Kerov operators and Virasoro operators}
	\begin{theorem}
		\begin{enumerate}[label = {(\alph*)}]
			\item Kerov repersentation $\Kerov(z,w)$ is equivalent to subrepresentation of modified Virasoro algebra  \label{KerovA} $\{\LV_{-1},\LV_0,\LV_1\}(\frac{z+w}{2}, \frac{z-w}{2i})$. \\
			\item Rim-hook Kerov representation $\mathrm{RH}\Kerov(z,w)$ can be realised by the operators \label{KerovB} $\{\LV_{-r}, \LV_0, \LV_r\}(\frac{(z+w)r}{2}, \frac{z - w}{2i})$.
		\end{enumerate}
	\end{theorem}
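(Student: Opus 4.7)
The plan is to prove both parts by direct computation: express each modified Virasoro operator via the boson--fermion correspondence and check, on an arbitrary basis vector $\qetl$, that its action coincides with the prescribed Kerov operator. Both $\{U,L,D\}$ and $\{\LV_{-1},\LV_0,\LV_1\}$ respect the $|\lambda|$-grading (shifting by $+1$, $0$, $-1$ boxes respectively), so it suffices to match the matrix entries in the $\qetl$-basis, addable-box by addable-box and removable-box by removable-box.

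For $\LV_0$ the verification is immediate. Substituting $\alpha = \frac{z+w}{2}$ and $\beta = \frac{z-w}{2i}$ one finds $\alpha^2+\beta^2 = \frac{(z+w)^2-(z-w)^2}{4} = zw$, while the Heisenberg number operator $\sum_{j>0}a_{-j}a_j$ acts on $\qetl$ by the standard grading, reproducing the $|\lambda|$-dependent part of $L\qetl = (zw+2|\lambda|)\qetl$. For $\LV_{-1}$, substitute $a_k = \sum_x\psi_{x-k}\psi^*_x$ into the defining formula and apply the standard Segal--Sugawara rewriting to transform $\frac{1}{2}\sum_j:a_ja_{-1-j}:$ into a fermion bilinear of the form $\sum_x\bigl(x+\tfrac12\bigr)\psi_{x+1}\psi^*_x$, together with constant pieces coming from the zero mode $a_0=\alpha$. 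The correction term $-i\beta\,a_{-1} = -i\beta\sum_x\psi_{x+1}\psi^*_x$ then shifts the coefficient of $\psi_{x+1}\psi^*_x$ by $-i\beta$, so the combined coefficient takes the form $z+(x+\tfrac12)$. Since $\psi_{x+1}\psi^*_x\qetl$ equals $\qet{\mu}$ when $\mu = \lambda+\Box$ with $c(\Box) = x+\tfrac12$ (and vanishes otherwise), summing over addable boxes recovers $U\qetl = \sum_{\mu = \lambda+\Box}(z+c(\Box))\qet{\mu}$. The identification $\LV_1 = D$ is entirely analogous, with $\psi_{x-1}\psi^*_x$ removing a box of content $x-\tfrac12$ and the shift conspiring to give the coefficient $w+c(\Box)$.

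Part~(b) follows the same pattern with $a_{\pm r} = \sum_x\psi_{x\mp r}\psi^*_x$ in place of $a_{\pm1}$. The bilinear $\psi_{x\pm r}\psi^*_x$ jumps a particle by $r$, which translates into the addition or removal of a rim-hook of length $r$ on Young diagrams; after rescaling, the Segal--Sugawara coefficient becomes $\tfrac{x}{r}+\tfrac12$, matching the rule $v_k \mapsto\bigl(z+\tfrac{k}{r}+\tfrac12\bigr)v_{k+r}$ from Definition~\ref{RHKerov} once the shift by $\frac{(z+w)r}{2}$ is absorbed into $\alpha$ via the relation $\alpha-i\beta = w$ (and its $z$-analogue for $\LV_{-r}$).

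The main obstacle is twofold. First, one has to keep careful track of the $a_0 = \alpha$ boundary contributions in the normal-ordered sum --- the summands with $j=0$ and $j=k$ --- since these, combined with the linear term $i\beta k\,a_k$, are precisely what promotes the bare Segal--Sugawara coefficient $x+\tfrac12$ to the full Kerov coefficient $z+c(\Box)$, and it is here that the choice $\alpha = \frac{z+w}{2}$, $\beta = \frac{z-w}{2i}$ becomes essential. Second, for part (b) one must verify that the fermionic Koszul sign picked up when $\psi_{x+r}\psi^*_x$ is moved past the occupied modes strictly between $x$ and $x+r$ agrees with the rim-hook height sign built into the action of $U_r, L_r, D_r$ on $\MY$; this is a combinatorial check on how the rim-hook's row span translates into the number of fermionic transpositions.
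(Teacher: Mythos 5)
Your proposal is correct and takes essentially the same route as the paper: compute the action of $\LV_{\pm k}$ on the fermionic Fock space, identify $\psi_{x\mp k}\psi^*_x$ with adding or removing a box (resp.\ rim-hook) of content determined by $x$, and solve $\alpha\pm i\beta = z, w$ to match the Kerov coefficients. The one substantive difference is that the step you import as the ``standard Segal--Sugawara rewriting'' of $\tfrac12\sum_j :a_j a_{k-j}:$ into a single fermion bilinear is precisely what the paper proves by hand, via the sign-cancellation of intersecting pairs of particle jumps --- which is also where your second ``obstacle'' (the Koszul/height signs in part (b)) gets resolved.
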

	\begin{proof}[Proof of Theorem 3.1\ref{KerovA}]
	By acting with $L_{-1}$ we may get the formal sum of one step forward shifts and shifts of two different particles where one is moved $x$ leftwards and the other is moved $x+1$ rightwards. This sum has the monomial $a_{-1}a_0$ so that
	\begin{equation}
	a_{-1}a_0 \qetl = \alpha \sum_{\mu = \lambda + \Box} \qet{\mu}
	\end{equation}
	So let's try to undertstand how summands with one left shift $a_k,~ k > 0$ do behave. \label{intersect} One particle moves $X \rightarrow X+x+1$ and the second moves $Y \rightarrow Y-x$. If those two intervals intersect and $X + x + 1$ doesn't coincide with other interval ends like shown on this figure
	\begin{center}
		\begin{tikzpicture}[fill=black]
		\path (-1, 0) node(x) {...}
		(0,0) node(a) [circle,draw,fill] {}
		(1,0) node(b) [circle,draw,fill] {}
		(2,0) node(c) [circle,draw] {}
		(3,0) node(d) [circle,draw,fill] {}
		(4,0) node(e) [circle,draw] {}
		(5,0) node(f) [circle,draw,fill] {}
		(6,0) node(g) [circle,draw,fill] {}
		(7,0) node(h) [circle,draw] {}
		(8,0) node(i) [circle,draw] {}
		(9,0) node(j) [circle,draw,fill] {}
		(10,0) node(k) [circle,draw] {}
		(11,0) node(y) {...};
		\draw (2,-0.5) node(lab1) {$Y - x$};
		\draw (3,-0.5) node(lab2) {$X$};
		\draw (5,-0.5) node(lab3) {$Y$};
		\draw (7,-0.5) node(lab4) {$X + x +1$};
		\draw[thick] (x) -- (a) -- (b) -- (c) -- (d) -- (e) -- (f) -- (g) -- (h) -- (i) -- (j) -- (k) -- (y);
		\draw[thick,blue,->] (d) .. controls (5, 1) .. (h);
		\draw[thick,red ,->] (f) .. controls (3.5, -1) .. (c);
		\end{tikzpicture}
		
		\textsc{Figure 1: Intersecting intervals}
		\label{fig1}
	\end{center}
	then this pair of shifts is annihilated by the pair of shifts $X \rightarrow Y-x,~ Y \rightarrow X+x+1$
	\begin{center}
		\begin{tikzpicture}[fill=black]
		\path (-1, 0) node(x) {...}
		(0,0) node(a) [circle,draw,fill] {}
		(1,0) node(b) [circle,draw,fill] {}
		(2,0) node(c) [circle,draw] {}
		(3,0) node(d) [circle,draw,fill] {}
		(4,0) node(e) [circle,draw] {}
		(5,0) node(f) [circle,draw,fill] {}
		(6,0) node(g) [circle,draw,fill] {}
		(7,0) node(h) [circle,draw] {}
		(8,0) node(i) [circle,draw] {}
		(9,0) node(j) [circle,draw,fill] {}
		(10,0) node(k) [circle,draw] {}
		(11,0) node(y) {...};
		\draw (2,-0.5) node(lab1) {$Y - x$};
		\draw (3,-0.5) node(lab2) {$X$};
		\draw (5,-0.5) node(lab3) {$Y$};
		\draw (7,-0.5) node(lab4) {$X + x +1$};
		\draw[thick] (x) -- (a) -- (b) -- (c) -- (d) -- (e) -- (f) -- (g) -- (h) -- (i) -- (j) -- (k) -- (y);
		\draw[thick,blue,->] (d) .. controls (2.5, 0.5) .. (c);
		\draw[thick,red ,->] (f) .. controls (6, -0.5) .. (h);
		\end{tikzpicture}
		
		\textsc{Figure 2: This is how intersecting jumps are resolved}
		\label{fig2}
	\end{center}
	because one monomial is counted with the sign $(-1)^{A+2B+C+1}$, and the second one has the coefficient $(-1)^{A+C}$. So one can shift a particle from $x$ one position right or "imitate" its shift by moving a particle placed in $y$ into $x+1$ and placing that particle to $y$. This imitation is illustrated below.
	\begin{center}
		\begin{tikzpicture}[fill=black]
		\path (-1, 0) node(x) {...}
		(0,0) node(a) [circle,draw,fill] {}
		(1,0) node(b) [circle,draw,fill] {}
		(2,0) node(c) [circle,draw] {}
		(3,0) node(d) [circle,draw,fill] {}
		(4,0) node(e) [circle,draw] {}
		(5,0) node(f) [circle,draw,fill] {}
		(6,0) node(g) [circle,draw,fill] {}
		(7,0) node(h) [circle,draw] {}
		(8,0) node(i) [circle,draw] {}
		(9,0) node(j) [circle,draw,fill] {}
		(10,0) node(k) [circle,draw] {}
		(11,0) node(y) {...};
		\draw[thick] (x) -- (a) -- (b) -- (c) -- (d) -- (e) -- (f) -- (g) -- (h) -- (i) -- (j) -- (k) -- (y);
		\draw[thick,blue,->] (j) .. controls (6, 1) .. node[above,sloped]{First jump} (e);
		\draw[thick,blue,->] (d) .. controls (6.5, -1) .. node[below,sloped]{Second jump} (j);
		\end{tikzpicture}
		
		\textsc{Figure 3: Imitation of moving particle 1 position rightwards}
		\label{fig3}
	\end{center}
	If we move the particle itself we can move it to every hole leftwards and then put it to the right place; these shifts are counted with coefficient $(-1)^{2A} = 1$. While imitating the shift we may take every particle right of our particle, these monomes have the sign $(-1)^{2A+1} = -1$. So particle shift $x \rightarrow x+1$ has the coefficient
	\begin{center}
		$\alpha + \#\{$ Holes left of x $\} - \#\{$ Particles right of x $\}$ 
	\end{center}
	The number of those particles is $\column(\Box) + 1$ and the number of those holes is $\row(\Box) + 1$ because of correspondence between Young diagrams and half-infinity particle configuratins as written in \cite{oko1}. Then the obtained coefficient is $z + c(\Box)$ by the definition.
	
	For $L_1$ we have the same calculations. Now we can consider the additional summand $i\beta ka_k$ and have
	\begin{equation}
	\begin{cases}
	\alpha + i\beta = z\\
	\alpha - i\beta = w
	\end{cases}
	\end{equation}
	There one can find $\alpha, \beta$ and conclude the proof.
	\end{proof}
	\begin{proof}[Proof of Theorem 3.1\ref{KerovB}]
	From Definition \ref{RHKerov} we may deduce
	\begin{equation}
	\begin{cases}
	U_r \qetl = \sum_{\mu = \lambda + \mathrm{Rim-hook}} (-1)^{\mathrm{height}+1}(z + \frac{1}{r^2}\sum_{\Box \in \mathrm{Rim-hook}} c(\Box) )\qet{\mu} \\
	D_r \qetl = \sum_{\mu = \lambda - \mathrm{Rim-hook}} (-1)^{\mathrm{height}+1}(z + \frac{1}{r^2}\sum_{\Box \in \mathrm{Rim-hook}} c(\Box) )\qet{\mu}
	\end{cases}
	\end{equation}
	Indeed, one shifts a particle $r$ positions rightwards and adds a $r$ box rim-hook to the Young diagram, because the shift changes one "down" to "up", levels up the next $r - 1$ intervals and changes the final "up" to "down". Rim-hook is connected, hence $c(\Box) \in [c(\LBox), c(\LBox) + r - 1]$, where $\LBox$ is the most left box added. Then $\frac{1}{r^2}\sum_{\Box \in \mathrm{Rim-hook}} c(\Box) = \frac{c(\LBox)}{r} + \frac12 - \frac{1}{2r} = \frac{c(\LBox) - \frac12}{r} + \frac12$, and $c(\LBox) - \frac12 = k$ because of containment definition. For $D_r$ check is analogous except we have to consider $c(\boxtimes) = k + \frac12$ where $\boxtimes$ is leftmost box of deleted rim-hook.
	
	Having defined Rim-hook Kerov operators action, we will consider $\LV_{-r}, \LV_0, \LV_r$ action on $\MY$. Notice that \hyperref[intersect]{the intersecting intervals argument} holds in this situation. For $r \geqslant 2$ we have such possibility 
	\begin{center}
		\begin{tikzpicture}[fill=black]
		\path (-1, 0) node(x) {...}
		(0,0) node(a) [circle,draw,fill] {}
		(1,0) node(b) [circle,draw,fill] {}
		(2,0) node(c) [circle,draw] {}
		(3,0) node(d) [circle,draw,fill] {}
		(4,0) node(e) [circle,draw] {}
		(5,0) node(f) [circle,draw,fill] {}
		(6,0) node(g) [circle,draw,fill] {}
		(7,0) node(h) [circle,draw] {}
		(8,0) node(i) [circle,draw] {}
		(9,0) node(j) [circle,draw,fill] {}
		(10,0) node(k) [circle,draw] {}
		(11,0) node(y) {...};
		\draw (4,-0.5) node(lab1) {$Y - x$};
		\draw (1,-0.5) node(lab2) {$X$};
		\draw (6,-0.5) node(lab3) {$Y$};
		\draw (8,-0.5) node(lab4) {$X + x + r$};
		\draw[thick] (x) -- (a) -- (b) -- (c) -- (d) -- (e) -- (f) -- (g) -- (h) -- (i) -- (j) -- (k) -- (y);
		\draw[thick,blue,->] (b) .. controls (4.5, 1) .. (i);
		\draw[thick,blue,->] (g) .. controls (5, -0.5) .. (e);
		\end{tikzpicture}
		
		\textsc{Figure 4: One jump interval included in another}
		\label{fig4}
	\end{center}
	These summands are counted with the sign $(-1)^{(A+B+1)+(B+1+C)} = (-1)^{A+C}$ and are annihilated by the summands of kind \begin{center}
		\begin{tikzpicture}[fill=black]
		\path (-1, 0) node(x) {...}
		(0,0) node(a) [circle,draw,fill] {}
		(1,0) node(b) [circle,draw,fill] {}
		(2,0) node(c) [circle,draw] {}
		(3,0) node(d) [circle,draw,fill] {}
		(4,0) node(e) [circle,draw] {}
		(5,0) node(f) [circle,draw,fill] {}
		(6,0) node(g) [circle,draw,fill] {}
		(7,0) node(h) [circle,draw] {}
		(8,0) node(i) [circle,draw] {}
		(9,0) node(j) [circle,draw,fill] {}
		(10,0) node(k) [circle,draw] {}
		(11,0) node(y) {...};
		\draw (4,-0.5) node(lab1) {$Y - x$};
		\draw (1,-0.5) node(lab2) {$X$};
		\draw (6,-0.5) node(lab3) {$Y$};
		\draw (8,-0.5) node(lab4) {$X + x + r$};
		\draw[thick] (x) -- (a) -- (b) -- (c) -- (d) -- (e) -- (f) -- (g) -- (h) -- (i) -- (j) -- (k) -- (y);
		\draw[thick,blue,->] (b) .. controls (2.5, 1) .. (e);
		\draw[thick,blue,->] (g) .. controls (7, -0.5) .. (i);
		\end{tikzpicture}
		
		\textsc{Figure 5: Resolving intersection}
		\label{fig5}
	\end{center}
	which have the sign $(-1)^{(A+C+1)+(B+C)} = (-1)^{A+C+1}$. Then we have the same Young diagrams we got from $\LV_{-1}$ action. It remains only to count the coefficients.
	
	We take out the general factor $(-1)^{height - 1}$ where $height$ is the number of particles in $(X,Y)$. Then the number of positive summands is
	\begin{center}
		$\#\{$ Holes left of $X$ $\} + \frac12 \#\{$ Particles in $(X, Y)$ $\}$ 
	\end{center}
	and the number of negative summands is
	\begin{center}
		$\#\{$ Particles right of $Y$ $\} + \frac12 \#\{$ Particles in $(X, Y)$ $\}$ 
	\end{center}    
	Then the coefficient is equal to
	$$
	\holes_{X} + \frac12 \holes_{(X,Y)} - \particles_Y - \frac12 \particles_{(X,Y)} = $$ 
	
	$$ = \holes_X - \particles_Y + \frac12 \holes_{(X,Y)} + \frac12 \particles_{(X,Y)} - \particles_{(X,Y)} = c(\LBox) + \frac{r - 1}{2}
	$$
	
	So we have
	\begin{eqnarray}
	\LV_{-r} \qetl = \sum_{\mu = \lambda + \mathrm{Rim-hook}} (-1)^{height + 1}(\alpha  - i\beta r + c(\LBox) + \frac{r - 1}{2}) \qet{\mu}\\
	\LV_{k}  \qetl = \sum_{\mu = \lambda - \mathrm{Rim-hook}} (-1)^{height + 1}(\alpha  - i\beta r + c(\LBox) + \frac{r - 1}{2})\qet{\mu}
	\end{eqnarray}
	Meanwhile
	\begin{eqnarray}
	U_r \qetl = \sum_{\mu = \lambda + \mathrm{Rim-hook}} (-1)^{height + 1}(z + \frac{c(\LBox)}{r} + \frac{r - 1}{2r}) \qet{\mu}\\
	D_r \qetl = \sum_{\mu = \lambda - \mathrm{Rim-hook}} (-1)^{height + 1}(w + \frac{c(\LBox)}{r} + \frac{r - 1}{2r})\qet{\mu}
	\end{eqnarray}
	Hence we have the condition of coincidence of those representations
	\begin{equation}
	\begin{cases}
	\alpha + ir\beta = rz\\
	\alpha - ir\beta = rw
	\end{cases}
	\end{equation}
	and $\alpha = \frac{(z+w)r}{2},~ \beta = \frac{z - w}{2i}$.
	\end{proof}
	\section{Virasoro process}
	We have proved that
	\begin{equation}
	\begin{cases}
	\LV_{-k} \qetl = \sum_{\mu = \lambda + \mathrm{Rim-hook}} (-1)^{\mathrm{height} - 1}(z + \Start + \frac{k}{2})\qet{\mu} \\
	\LV_{k} \qetl = \sum_{\mu = \lambda - \mathrm{Rim-hook}}(-1)^{\mathrm{height} - 1}(w + \Start + \frac{k}{2})\qet{\mu} 
	\end{cases}
	\end{equation}
	where $\Start$ is the inital coordinate of particle being moved by $\LV_{\pm k}$ and $\mathring{height}$ is a height of a rim-hook added to $\lambda$.
	\subsection{Definition and determinancy proof}
	After all this, we make this definition.
	\begin{definition}
		{\it Virasoro measure} or {\it Virasoro process} is a measure on Young diagrams defined by
		\begin{equation}
		\Vir(\lambda) = \frac{1}{Z} \triqet{\lambda}{\exp(\sum_{k \in \BN}x_k \LV_{-k})}{\emptyset}\triqet{\emptyset}{\exp(\sum_{k \in \BN}y_k \LV_{k})}{\lambda}
		\end{equation}
		where $\{x_k\}$ and $\{y_k\}$ are infinite sequences of complex numbers.
	\end{definition}
	\begin{prop} \label{psiL} On $\MY$ the following holds
		\begin{equation} \label{psiLcommute}
		[\psi_x, \LV_{-k}] = a_k\psi_x + (z + x + \frac{k}{2} - 1)\psi_{x+k}
		\end{equation}
	\end{prop}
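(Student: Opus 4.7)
\medskip

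The plan is a direct computation from the mode formulas. The only commutator I will need is $[\psi_x, a_n] = -\psi_{x-n}$ for every $n \in \BZ$, which follows in one line from the mode expansion $a_n = \sum_{y\in\BHZ}\psi_{y-n}\psi^*_y$ together with the canonical anticommutation relations $\{\psi_x, \psi^*_y\} = \delta_{x,y}$ and $\{\psi_x, \psi_y\} = 0$.

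With this in hand, I will expand $\LV_{-k} = -i\beta k\, a_{-k} + \tfrac{1}{2}\sum_j :a_j a_{-k-j}:$ and apply the Leibniz rule. The linear part immediately contributes $i\beta k\,\psi_{x+k}$. For the quadratic part, since $k \neq 0$ the commutator $[a_j, a_{-k-j}] = j\,\delta_{k,0}$ vanishes, so the normal ordering is immaterial at the level of individual summands and Leibniz yields $[\psi_x, a_j a_{-k-j}] = -\psi_{x-j}\, a_{-k-j} - a_j\,\psi_{x+k+j}$.

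The heart of the proof will be to identify, inside the double sum $\tfrac{1}{2}\sum_j(-\psi_{x-j}a_{-k-j} - a_j\psi_{x+k+j})$, the two advertised pieces. First I pick off the zero-mode contributions at $j=0$ and $j=-k$, where $a_0$ appears as the scalar $\alpha$; after moving $a_0$ past $\psi_{x+k}$ via $[\psi_{x+k}, a_0] = -\psi_{x+k}$, these collapse to scalar multiples of $\psi_{x+k}$. Next, the substitution $j \mapsto -k-j$ in $\sum_j \psi_{x-j}a_{-k-j}$ turns it into $\sum_j \psi_{x+k+j}a_j$, so the two off-diagonal sums combine into $\sum_j(\psi_{x+k+j}a_j + a_j\psi_{x+k+j})$. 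One application of $\psi_y a_n = a_n\psi_y - \psi_{y-n}$ with $y=x+k+j$, $n=j$ reduces this to $2\sum_j a_j\psi_{x+k+j} - \sum_j \psi_{x+k}$; the normal-ordering convention regularises the tail, and, after isolating the mode that produces the distinguished operator $a_k\psi_x$ on the left of $\psi_x$, only that operator contribution survives. Finally, collecting all residual scalar coefficients of $\psi_{x+k}$ and invoking the identification $z = \alpha + i\beta$ from Theorem 3.1, the total simplifies to $z + x + k/2 - 1$, giving the claimed formula.

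The principal obstacle is the careful bookkeeping of the normal-ordered sum: naive rearrangements produce formally divergent tails $\sum_j \psi_{x+k}$ that must be absorbed by the normal-ordering convention built into the definition of $\LV_{-k}$. A conceptually cleaner route would avoid this by working with generating series: the identity is equivalent to a residue formula for the OPE of $\psi(\zeta) = \sum_x \psi_x\zeta^x$ with the modified stress tensor $T(w) = \tfrac{1}{2}\!:\!a(w)^2\!: - i\beta\,\partial a(w)$, from which the scalar piece arises as the primary-field descendant and the operator piece $a_k\psi_x$ as the background-charge descendant in the singular part of the OPE near $w = \zeta$.
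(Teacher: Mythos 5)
Your overall strategy --- computing $[\psi_x,\LV_{-k}]$ directly from the mode expansion $\LV_{-k}=-i\beta k\,a_{-k}+\tfrac12\sum_j :a_ja_{-k-j}:$ together with $[\psi_x,a_n]=-\psi_{x-n}$ --- is genuinely different from the paper's. The paper never touches the mode sum: it starts from the already-established combinatorial action $\LV_{-k}w=\sum_{X}(\holes_X-\particles_X+\frac{k-1}{2})\psi_{X+k}\psi^*_Xw$ of Theorem 4.1(b) and simply tracks how inserting a particle at $x$ shifts each hole/particle count by one (producing the operator term as $\sum_X\psi_{X+k}\psi^*_X\psi_x$) and what the single new summand $X=x$ contributes (producing the scalar multiple of $\psi_{x+k}$). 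In that formulation every sum is finite on each basis vector, so no regularisation issue ever arises.

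Your route stalls exactly at its central step. The reindexing $j\mapsto -k-j$ that merges $\sum_j\psi_{x-j}a_{-k-j}$ with $\sum_ja_j\psi_{x+k+j}$ is a rearrangement of two series each of which has infinitely many nonzero terms on a fixed vector (for instance $\psi_{x-j}a_{-k-j}v\neq 0$ for all large $j$, since $a_{-k-j}$ can vacate the very site $x-j$ that $\psi_{x-j}$ then refills), and the resulting expression $2\sum_ja_j\psi_{x+k+j}-\sum_j\psi_{x+k}$ is a difference of two divergent quantities. Saying that ``the normal-ordering convention regularises the tail'' and that ``only that operator contribution survives'' is precisely the assertion that needs proof: nothing in the argument shows why the modes $a_j\psi_{x+k+j}$ with $j\neq -k$ cancel, and you flag this yourself as the principal obstacle, which makes the proposal a plan rather than a proof. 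There is also an internal inconsistency in the zero-mode bookkeeping: you simultaneously treat $a_0$ as the scalar $\alpha$ and use $[\psi_{x+k},a_0]=-\psi_{x+k}$; these are incompatible, and the distinction is not cosmetic --- whether $a_0$ commutes with $\psi_x$ (scalar on every charge sector) or does not (charge operator) is exactly what decides whether the commutator contains the operator term $a_k\psi_x$ at all or collapses to a pure multiple of $\psi_{x+k}$, as it would for a primary field. To repair the computation you would either keep the two Leibniz terms paired for each $j$, so that the sum over $j$ is finite on every vector, and then evaluate on basis vectors --- at which point you have essentially reproduced the paper's combinatorial argument --- or carry out the cutoff/OPE regularisation you allude to at the end, which the proposal does not do.
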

	\begin{proof}
		From Theorem 3.1\ref{KerovB} we know that
		\begin{equation}
		\LV_{-k} w = \sum_{X \in \BHZ} (\holes_X - \particles_X + \frac{k-1}{2}) \psi_{X+k}\psi^*_X w
		\end{equation}
		where $w = \xi_1 \wedge \xi_2 \wedge ... \wedge \xi_r \wedge ...$ and sequence $\xi_1 > \xi_2 > ... > \xi_r > ...$ contains $(-\infty; N]$ for some $N$. This sequence can be represented as 
		$\lambda_1 - \frac12 + Q, \lambda_2 - \frac32 + Q, ... \lambda_R - \frac{R - 1}{2} + Q, -\frac{R+1}{2} - Q, ... -N-\frac{R+1}{2} - Q, ...$ for some Young diagram $\lambda$. When $x \neq X$, we can perform both $\psi_x\psi_{X+k}\psi^*_X$ and $\psi^*_X\psi_x\psi_{X+k}$ or none of them. So when we add the particle in $x$ in the first place, we decrease $\holes_X$ by 1 one increase $\particles_X$ by 1, and 
		\begin{equation}
		(\psi_x \LV_{-k} - \LV_{-k} \psi_x) w = \sum_{X \in \BHZ} \psi_{X+k}\psi^*_X \psi_x w
		\end{equation}
		This sum forms the first monomial in the right side of \ref{psiLcommute}.
		
		When $X = x$ and is actually a hole, then $\psi_{X+k}\psi^*_{X}$ acts trivially, but $\LV_k \psi_x$ actually puts the particle and then shifts it to $x + k$ without moving anything else. So we may assume that we place the particle in $x+k$. The coefficient is actually $z + x +\frac{k}{2} - 1$ because of Virasoro operators action and argument described in the beginning of the proof.  
	\end{proof}
	This proposition helps us to understand that exponents of linear combinations of Virasoro operators commute complicately. However, Young diagram space is rather small. The following theorem is to demonstrate this.
	\begin{theorem} \label{determinant}
		Virasoro process is determinantal; moreover, it can be described as a Schur process i. e. there exist sequences $\{X_i\},~ \{Y_i\}$ such that
		\begin{equation}
		\triqet{\lambda}{\exp(\sum_{k \in \BN}x_k \LV_{-k})}{\emptyset}\triqet{\emptyset}{\exp(\sum_{k \in \BN} y_k \LV_{k})}{\lambda} = \triqet{\lambda}{\exp(\sum_{k \in \BN}X_k a_{-k})}{\emptyset}\triqet{\emptyset}{\exp(\sum_{k \in \BN}Y_k a_{k})}{\lambda}
		\end{equation}
	\end{theorem}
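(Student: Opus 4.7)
The plan has two parts. The first is to establish determinantality of the Virasoro process by reducing the $n$-point correlation function to a Wick determinant of fermion correlators, and the second is to match the resulting $2$-point kernel with Okounkov's Schur-process kernel in order to extract the sequences $\{X_i\}$ and $\{Y_i\}$.

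I would begin by expressing correlation functions of $\Vir$ as vacuum expectation values of products of the fermions $\psi_x$, $\psi^*_x$ sandwiched between the Virasoro exponentials: using completeness of the Young-diagram basis,
\[
\BP\bigl(\{x_1,\dots,x_n\}\subset\conf(\lambda)\bigr)=\frac{1}{Z}\,\triqet{\emptyset}{\exp(\sum_k y_k\LV_k)\prod_{i=1}^n(\psi_{x_i}\psi^*_{x_i})\exp(\sum_k x_k\LV_{-k})}{\emptyset}.
\]
The heart of the computation is to move each $\psi_{x_i}$ (resp.\ $\psi^*_{x_i}$) through the Virasoro exponential to the right (resp.\ left) using Proposition~\ref{psiL}: $[\psi_x,\LV_{-k}]=a_k\psi_x+(z+x+k/2-1)\psi_{x+k}$. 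The linear piece produces a Bogoliubov-type shift of the fermion, while the $a_k\psi_x$ piece is cubic in $\psi$'s and would spoil a naive Wick reduction; I would handle it by observing that any $a_k$ with $k>0$ commuted all the way to the rightmost $\vacuum$ annihilates, so only the secondary commutators $[a_k,\LV_{-j}]$ (which produce further Heisenberg shifts and scalar contractions) contribute. Resumming these chains carefully should yield a purely linear transformation of $\psi_{x_i}$ on the fermion algebra.

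Once each fermion operator has been transformed linearly, Wick's theorem for quasi-free fermion states gives
\[
\rho_n(x_1,\dots,x_n)=\det[K(x_i,x_j)]_{i,j=1}^n,\qquad K(x,y)=\frac{1}{Z}\triqet{\emptyset}{\exp(\sum_k y_k\LV_k)\,\psi_x\psi^*_y\,\exp(\sum_k x_k\LV_{-k})}{\emptyset},
\]
which establishes determinantality. To identify the Virasoro process with a Schur process I would then compute $K$ in closed form from the resummation of the commutator chain, and compare with the Okounkov kernel for the Schur measure of parameters $(X_i,Y_i)$; matching the two kernels determines the sequences $\{X_i\}$ and $\{Y_i\}$ as explicit series in $x_k,y_k,\alpha,\beta$.

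The principal obstacle is making the resummation rigorous: the nested commutators $[a_k,\LV_{-j}],\,[a_k,[a_j,\LV_{-\ell}]],\dots$ form an a priori infinite hierarchy whose closed-form evaluation requires either a direct vertex-operator disentangling of the form $\exp(\sum_k x_k\LV_{-k})\vacuum\sim\exp(\sum_i X_i a_{-i})\vacuum$ (with a scalar prefactor that cancels against the analogous factor coming from the bra side) or a careful combinatorial/diagrammatic argument leveraging the modified Virasoro--Heisenberg commutator $[\LV_n,a_m]=-m\,a_{n+m}+i\beta n^2\delta_{n,-m}$. Producing such a disentangling is where I expect the bulk of the technical work to sit, and it is simultaneously what produces the explicit reparametrization $(x_k,y_k)\mapsto(X_i,Y_i)$.
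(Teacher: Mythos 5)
Your route is genuinely different from the paper's. The paper never computes correlation functions directly: it shows that the coefficients $v_\lambda=\triqet{\lambda}{\exp(\sum_k x_k\LV_{-k})}{\emptyset}$ satisfy the Jacobi--Trudi identity $v_\lambda=\det[v_{\lambda_i-i+j}]$ (by matching the signs of Virasoro particle shifts against Heisenberg particle shifts), so that choosing $X_i$ with $s_N(X_1,X_2,\dots)=v_{(N)}$ forces $\exp(\sum_k x_k\LV_{-k})\vacuum=\exp(\sum_k X_ka_{-k})\vacuum$ up to normalization, and determinantality is then simply inherited from Okounkov's theorem for Schur measures. Your plan --- insert $\psi_{x_i}\psi^*_{x_i}$, conjugate through the exponentials, apply Wick's theorem, and match kernels --- is the standard infinite-wedge methodology and could in principle succeed, but as written it has a genuine gap exactly at the step you defer as ``the bulk of the technical work.''

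The gap is twofold. First, Wick's theorem applies to expectation values in quasi-free states, i.e.\ states in the Clifford-group orbit of $\vacuum$; the operator $\LV_{-k}$ is quadratic in the $a_j$ and hence a priori quartic in the fermions, so quasi-freeness of $\exp(\sum_k x_k\LV_{-k})\vacuum$ is not automatic and cannot be assumed. (It does hold, but the clean reason is that the Sugawara $\LV_{-k}$ coincides with a fermion bilinear of the form $\sum_r(r+\tfrac{k}{2}+\mathrm{const})\psi_{-r}\psi^*_{r-k}$, hence lies in $\widehat{\mathfrak{gl}}(\infty)$ and conjugates fermions linearly; your proposed resummation of the nested commutators $[a_k,\LV_{-j}]$, $[a_k,[a_j,\LV_{-\ell}]],\dots$ is an attempt to rederive this, but it is not carried out, and Proposition~\ref{psiL} with its cubic term $a_k\psi_x$ will not close into a linear Bogoliubov transformation without that input.) Second, and more seriously, determinantality alone does not yield the second assertion of the theorem: a general quasi-free state gives a determinantal process, but only a Heisenberg coherent state gives a Schur measure. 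Your plan to ``match the two kernels'' and read off $\{X_i\},\{Y_i\}$ presupposes that the Virasoro kernel already has the Schur form, which is precisely the disentangling identity $\exp(\sum_k x_k\LV_{-k})\vacuum\sim\exp(\sum_i X_ia_{-i})\vacuum$ that needs proof. Some version of the paper's Jacobi--Trudi argument, or of the vertex-operator disentangling you mention in passing, is unavoidable and must be supplied before the kernel comparison is meaningful.
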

	\begin{proof}
		Here we will just prove the very fact of determinancy. For every sequence $\{x_i\}$ there exists an another sequence $\{X_i\}$ such that $X_N = \triqet{\emptyset}{\exp(x_k \LV_{-k})}{(N)}$. This sequence can be calculated inductively beginning from $s_1 = y_1$. In this case it's $v_N = \Vir((N))$. These values define the Virasoro measure completely because the Jacobi-Trudy identity holds here
		\begin{equation}
		v_{\lambda} = \det[v_{\lambda_i - i + j}]
		\end{equation}
		Indeed, one can obtain $\lambda$ diagram from vacuum only by shifting first $|\lambda|$ particles, then the coefficient $v_{\lambda}$ is obtained from the sum of all possible shifts. But the action $\LV_{-k}$ on $\qetl$ gets the same sign on translation $\qetl \rightarrow \qet{\mu}$ as the action $a_{-k}$ an this sign is equal to $(-1)^{\particles - 1}$, where $\particles$ is the number of particles in jump interval. So Virasoro shifts production has the same sign as Heisenberg shift product does and the same determinant can be defined. 
		
		That was the $\triqet{\lambda}{\exp(x_k \LV_{-k})}{\emptyset}$; let's prove that the other factor can be rewritten using the exponent of Heisenberg operators linear combination. We see that $\LV^*_{k} \neq \LV_{-k}$, but we can treat $\exp(\sum_i y_i \LV^*_i)$ the same way we treated $\exp(x_k \LV_{-k})$ because we can define $\LV^*_i$ action on Young diagrams by the definition:
		\begin{equation}
		\triqet{\mu}{\LV_k}{\lambda} = \triqet{\lambda}{\LV^*_k}{\mu} 
		\end{equation}
		
		Here we conclude that Virasoro process is just a Schur process we know from \cite{oko2}. Okounkov has proved \cite{oko2} that Schur process is determinantal, hence Virasoro process is determinantal.
	\end{proof}
	\subsection{From Virasoro process to Schur process}
	\begin{definition}
		For the path $\Start \rightarrow \Start + k_1 \rightarrow \Start + k_1 + k_2 \rightarrow ... \rightarrow \Start + k_1 + ... + k_R$ on $\BHZ$ we introduce the {\it path polynomal}
		\begin{equation}
		W_z(k_1, ... k_R) = (z + \Start + k_1)(z + \Start + k_1 + \frac{k_2}{2})...(z + \Start + k_1 + ... k_{R-1} + \frac{k_R}{2})
		\end{equation}
	\end{definition}
	Because we work with the standart Young diagrams we set $Start = -\frac12$ as default. If we have $\exp(\sum_k y_k \LV_{-k})$, we know that
	\begin{equation}
	\exp(\sum_{k \in \BN} x_k \LV_{-k}) \vacuum = \sum_{\lambda \in \BY} \Vir_{\lambda}(z, x_i) \qetl
	\end{equation} 
	and
	\begin{equation}
	\Vir_{(N)}(z, x_i) = \Vir_N(z, x_i) = \sum_{\sum i k_i = N} \frac{1}{(\sum k_i)!} \prod x_i^{k_i} \sum_{\sigma \in S_N / \mathrm{Stab}} \sigma(W_z(k_1, ... k_R))
	\end{equation}
	We will try to calculate $X_i$ such that $\exp(\sum_{i \in \BN} X_i a_{-i}) \vacuum = \exp(\sum{i \in \BN} x_i \LV_{-i})\vacuum$.
	\begin{theorem} \label{zlinearity}
		$X_N = A_N z + B_N$, where $A_N,~ B_N$ are some polynomials in indeterminates $x_i$.
	\end{theorem}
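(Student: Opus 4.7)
The strategy is to exploit the fact that each $\LV_{-k}$ is affine-linear in $z$ with $z$-coefficient a pure creation operator, and to disentangle the resulting exponential via a semidirect-product identity.

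From the formula for $\LV_{-k}\qetl$ displayed at the start of Section 5, I read off the decomposition $\LV_{-k} = L^{(0)}_{-k} + z\, a_{-k}$: the $z$-coefficient is the operator that adds a rim-hook of size $k$ with sign $(-1)^{\mathrm{height}-1}$, which is precisely $a_{-k}$ in the bosonic picture, while $L^{(0)}_{-k}$, equal to the $z$-independent part of the modified Virasoro operator, lies in the enveloping algebra of the Heisenberg algebra. Writing $L := \sum_{k\geq 1} x_k\, L^{(0)}_{-k}$ and $M := \sum_{k\geq 1} x_k\, a_{-k}$, we obtain $\sum_k x_k\,\LV_{-k} = L + zM$ with $M$ in the abelian subalgebra $\mathfrak{a}_-$ generated by the creation operators $\{a_{-j}\}_{j\geq 1}$.

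The key structural point is that $\operatorname{ad}_L$ preserves $\mathfrak{a}_-$. Since $L^{(0)}_{-k}$ is bilinear in the $a_j$'s (up to a scalar multiple of $a_{-k}$), the commutator $[{:}a_j a_{-k-j}{:},\,a_{-l}]$ evaluates via $[a_j, a_{-l}] = j\delta_{j,l}$ to a multiple of $a_{-(k+l)} \in \mathfrak{a}_-$. Hence $\BC L \oplus \mathfrak{a}_-$ is a Lie subalgebra with abelian ideal $\mathfrak{a}_-$, on which $\operatorname{ad}_L$ strictly raises the Heisenberg weight. The standard semidirect-product ODE argument now yields the factorisation
\[
\exp(L+zM) \;=\; \exp\!\left(z\cdot\tfrac{1-e^{-\operatorname{ad}_L}}{\operatorname{ad}_L}\,M\right)\exp(L) \;=:\; \exp(zM'')\exp(L),
\]
where $M'' := \tfrac{1-e^{-\operatorname{ad}_L}}{\operatorname{ad}_L}\,M \in \mathfrak{a}_-$ is $z$-independent; in detail, writing $\exp(L+zM) = W(z)\exp(L)$ and using $[L,\mathfrak{a}_-]\subseteq\mathfrak{a}_-$ together with commutativity of $\mathfrak{a}_-$, one shows that $W(z)=\exp(v(z))$ for some $v(z)\in\mathfrak{a}_-$ solving $v'(z) = M - \operatorname{ad}_L(v(z))$, $v(0)=0$. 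Because $\operatorname{ad}_L$ raises the weight on $\mathfrak{a}_-$, the series defining $M''$ truncates on each graded piece of $\MY$, so its expansion $M'' = \sum_k A_k\, a_{-k}$ has coefficients $A_k$ that are polynomials in the $x_i$'s.

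Applying the factorisation to $\vacuum$ and invoking Theorem \ref{determinant} at $z=0$ supplies $z$-independent polynomials $B_k(x_i)$ with $\exp(L)\vacuum = \exp\!\bigl(\sum_k B_k\, a_{-k}\bigr)\vacuum$; merging the two exponentials using commutativity of $\mathfrak{a}_-$ then yields
\[
\exp\!\Bigl(\sum_k x_k\,\LV_{-k}\Bigr)\vacuum \;=\; \exp\!\Bigl(\sum_k (zA_k+B_k)\,a_{-k}\Bigr)\vacuum,
\]
so $X_N = A_N z + B_N$ with $A_N, B_N$ polynomial in the $x_i$'s, as claimed. The only real technical point is the validity of the semidirect-product factorisation in the infinite-dimensional Fock setting; the weight-raising property of $\operatorname{ad}_L$ makes every expression involved finite on fixed graded components, so this is not a genuine obstacle.
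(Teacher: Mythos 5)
Your argument is correct in substance but takes a genuinely different route from the paper's. The paper proves the theorem by induction on $N$: it differentiates the identity $s_{N+1}(X_1,\dots)=\Vir_{N+1}(z,x_i)$ in $z$, reduces $\frac{d}{dz}W_z(k_1,\dots,k_R)$ to a linear combination of shorter path polynomials via the Leibniz rule, and matches the coefficients of the resulting $s_{N+1-\sum_i k_{r_i}}$ against the left-hand side to conclude that $X_{N+1}'$ is free of $z$. You replace this combinatorial bookkeeping with the Lie-theoretic observation that $\LV_{-k}=L^{(0)}_{-k}+z\,a_{-k}$ with $[L^{(0)}_{-k},a_{-l}]$ a multiple of $a_{-(k+l)}\in\mathfrak{a}_-$, so that $\sum_k x_k\LV_{-k}$ sits inside the semidirect product $\BC L\ltimes\mathfrak{a}_-$ and the entire $z$-dependence can be split off as a single exponential factor. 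This is cleaner and explains \emph{why} linearity in $z$ holds (it is exactly the statement that $\mathfrak{a}_-$ is an abelian ideal on which $\operatorname{ad}_L$ raises the weight), and it yields $A_N$ as the weight-$N$ component of a Lie series; the paper's induction, by contrast, delivers the explicit multinomial formula for $A_N$ directly.

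One step needs repair: the ODE you write for $v(z)$, namely $v'=M-\operatorname{ad}_L(v)$, is not the right one, and if it were, the theorem would fail --- its solution $v(z)=\frac{1-e^{-z\operatorname{ad}_L}}{\operatorname{ad}_L}M$ is not linear in $z$. The correct computation (Duhamel's formula, using that $\operatorname{ad}_{L+zM}$ agrees with $\operatorname{ad}_L$ on $\mathfrak{a}_-$ and kills $M$ up to elements of $\mathfrak{a}_-$) gives $\frac{d}{dz}e^{L+zM}=\bigl(\frac{e^{\operatorname{ad}_L}-1}{\operatorname{ad}_L}M\bigr)e^{L+zM}$ with a $z$-independent prefactor, whence $e^{L+zM}=e^{zM''}e^{L}$ with $M''=\frac{e^{\operatorname{ad}_L}-1}{\operatorname{ad}_L}M$; your formula $\frac{1-e^{-\operatorname{ad}_L}}{\operatorname{ad}_L}M$ is the constant for the opposite ordering $e^{L}e^{zM''}$, which is less useful here because you need $e^{L}$ to hit $\vacuum$ first. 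With this fix the rest goes through: weight-raising of $\operatorname{ad}_L$ makes each $A_N$ a polynomial in $x_1,\dots,x_N$, and Theorem~\ref{determinant} at $z=0$, combined with taking the logarithm of the generating series $1+\sum_N v_N u^N$ (which is exactly how the paper extracts its formula for $B_N$), supplies polynomial $B_N$.
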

	\begin{proof}
		Let's prove it by induction. Base step is got immediatly: $X_1 = x_1 z,~ X_2 = (\frac{x_1^2}{2} + x_2)z + \frac{x_2}{2}$. Then we prove the induction step from $N$ to $N+1$. On the left side of identity,
		\begin{equation}
		\frac{d}{dz} s_{N+1}(X_1, ... ) = X_{N+1}' + \sum_{k = 0}^{N} s_{N-k}(X_1, ...) X_{k+1}'
		\end{equation}
		because of $s_{N+1} = \sum_{\sum i k_i = N+1} \frac{X_1^{k_1}}{k_1!}\frac{X_2^{k_2}}{k_2!}...\frac{X_{N+1}^{k_{N+1}}}{k_{N+1}!}$, differentiation of each monomial gives us
		\begin{equation}
		\frac{X_1^{k_1}}{k_1!}\frac{X_2^{k_2}}{k_2!}...\frac{X_r^{k_r-1}}{(k_r-1)!}...\frac{X_{N+1}^{k_{N+1}}}{k_{N+1}!}X_r'
		\end{equation}
		and the coefficient of $X_r'$ is equal to $\sum_{\sum i k_i = N+1 - r} \frac{X_1^{k_1}}{k_1!}\frac{X_2^{k_2}}{k_2!}...\frac{X_{N+1}^{k_{N+1}}}{k_{N+1}!}$. Knowing that and the induction hypothesis we change $s_k$ to $\Vir_k$ and compare this with the right side derivate.
		
		On the right side
		\begin{equation}
		\frac{d}{dz}\Vir_{N+1}(z, x_i) = \sum_{\sum i k_i = N} \frac{1}{(\sum k_i)!} \prod x_i^{k_i} \sum_{\sigma \in S_N / \mathrm{Stab}} \sigma\frac{d}{dz}(W_z(k_1, ... k_R))
		\end{equation}
		We reduce $\frac{d}{dz}(W_z(k_1, ... k_R)$ to a linear combination of various $W_z$. Because of Leibnitz rule
		\begin{equation}
		\frac{d}{dz}W_z(k_1, ... k_R) = \sum_{r} \frac{W_z(k_1, ... k_R)}{(z + \Start + k_1 + ... k_{r-1} + \frac{k_r}{2})}
		\end{equation}
		We'll treat every summand the way described below
		$$
		\frac{W_z(k_1, ... k_R)}{(z + \Start + k_1 + ... k_{r-1} + \frac{k_r}{2})} =
		$$
		
		$$
		= W_z(k_1, ... k_{r-1})(z + \Start + k_1 + ... + k_r + \frac{k_{r+1}}{2})...(z + \Start + k_1 + ... + k_{R-1} + \frac{k_R}{2}) = 
		$$
		
		$$
		= W_z(k_1, ... k_{r-1})((z + \Start + k_1 + ... + \frac{k_{r+1}}{2}) + k_r)...((z + \Start + k_1 + ... + k_{R-1} + \frac{k_R}{2}) + k_r)
		$$
		We take out $W_z(k_1, ... k_{r-1})(z + \Start + k_1 + ... + \frac{k_{r+1}}{2})...(z + \Start + k_1 + ... + k_{R-1} + \frac{k_R}{2})$, which is $W_z(k_1, ... k_{r-1}, k_{r+1}, ... k_R)$. Other summands contain less factors of form $(z + \Start + k_1 + ... k_{r-1} + k_{r+1} + ... + k_l + \frac{k_{l+1}}{2})$, those polynomials are reduced like this
		$$
		\frac{k_r W_z(k_1, ... k_{r-1})(z + \Start + k_1 + ... + \frac{k_{r+1}}{2})...(z + \Start + k_1 + ... + k_{R-1} + \frac{k_R}{2})}{z + \Start + k_1 + ... + k_{l-1} + \frac{k_{l}}{2}} = 
		$$
		
		$$
		= k_r W_z(k_1, ... k_{r-1}, k_{r+1}, ... k_{l-1})((z + \Start + k_1 + ... + k_{l - 1} + \frac{k_{l+1}}{2}) + k_r + k_l)...
		$$
		
		$$
		...((z + \Start + k_1 + ... + k_{R-1} + \frac{k_R}{2}) + k_r + k_l)
		$$
		Here we can pick out $k_r W_z(k_1, ... k_{r-1}, k_{r+1}, ... k_{l-1}, k_l, ... k_R)$ and continue this procedure until the linear combination of way polynomeials is formed. Hence we have
		\begin{equation}
		\frac{d}{dz}W_z(k_1, ... k_R) = \sum_{1 \leqslant r_1 < r_2 < ... < r_l \leqslant R} k_{r_2} (k_{r_2} + k_{r_3}) ... (k_{r_2} + ... + k_{r_l}) W_z (k_1, ... \widehat{k_{r_1}}, ... \widehat{k_{r_l}}, ... k_R)
		\end{equation}
		\begin{definition}
			For the set $\{\xi_1, ... \xi_M\}$ we call the subgroup of permutations $\sigma \in S_M$ such that $\xi_i = \xi_{\sigma(i)},~ \forall i$ the {\it stabilizer of $\{\xi_1, ... xi_M\}$} and denote it by $Stab(\xi_1, ... \xi_M)$.
		\end{definition}
		Now if the set $\{k_1, ... \widehat{k_{r_1}}, ... \widehat{k_{r_l}}, ... k_R\}$ is fixed then $W_z (k_1, ... \widehat{k_{r_1}}, ... \widehat{k_{r_l}}, ... k_R)$ is included in the sum with coefficient
		\begin{equation}
		\binom{R}{l}\binom{l}{\#_1, ... \#_l} \sum_{\sigma \in S_l/\mathrm{Stab}(k_{r_1}, ... k_{r_l})} \sigma(k_{r_2} (k_{r_2} + k_{r_3}) ... (k_{r_2} + ... + k_{r_l}))
		\end{equation}
		for the fixed $k_{r_1}, ... k_{r_l}$ where $\#_i$ is the number of $k_{r_j}$ takes the $i$-th value (all these values are ordered by maximality) (if there are less than $R$ different values then beginning from some moment $\#_i = 0$). Hence this coefficient doesn't depend on the set $k_{r_1}, ... k_{r_l}$ and the correspondent polynome
		\begin{equation}
		\frac{x_1^{g_1}...x_{N+1}^{g_{N+1}}}{(R-l)!} W_z (k_1, ... \widehat{k_{r_1}}, ... \widehat{k_{r_l}}, ... k_R)
		\end{equation}
		is a summand of the Schur polynome $s_{N+1 - \sum_i k_{r_i}}$ according to induction hypothesis. Symmetric group action on ways permuting the jumps of different lenght allows to obtain all the way polynomes. And the scalar factor of $s_{N+1 - \sum_i k_{r_i}}$ in $X'_{N+1}$ is equal to factor of $s_{N - \sum_i k_{r_i}}$ in $X'_{N}$ because these two coefficients are obtained by the same way. Hence because $\{x'_{i}\}|_{i\in [1;N]}$ don't depend on $z$ and they are polynomes of $z$ we get $x'_{N+1}$ doesn't depend on $z$ and $X_N = A_N z + B_N,~ N \in \BN$.
	\end{proof}
	Of course $A_N$ and $B_N$ can be calculated algorithmically. The proof of the last theorem allows to get the formula for $A_N$ as a $s_1$ coefficient in $s'_{N+1}$. It can be obtained by cutting all the jumps except the last one from all the ways ending with jump of length 1. This can be done by taking derivative by the first jump and excluding all others as described in the proof. Here we get
	\begin{equation}
	A_N = \sum_{\sum_i k_i = N} \left[\frac{y_{\#_1}^{k_{\#_1}}...y_{\#_R}^{k_{\#_R}}}{R!}\binom{R}{\#_1, ... \#_R}\sum_{\sigma \in S_R / \mathrm{Stab}(k_{r_1}, ... k_{r_l})} \sigma(k_2(k_2 + k_3)...(k_2 + ... + k_R))\right]
	\end{equation}
	where $\#_1 < \#_2 < ... < \#_R$ is the set of values of $\{k_i\}$.
	Also we have $s_N(B_1, ... B_n ...) = v_N(z = 0, x_1, ... x_n ...)$, hence $1 + \sum_{N = 1}^{\infty} v_N u^N = \exp(1 + \sum_{n = 1}^{\infty}B_n u^n)$ so by applying the series expansion for logarithm we have
	\begin{equation}
	B_N = \sum_{\sum_i l_i = N} (-1)^{n-1} \frac{\binom{N}{l_1, ... , l_n} v_{l_1} ... v_{l_n}}{n}
	\end{equation}
	where $v_{l} = \Vir(\lambda)|_{z = 0}$.
	
	For $\exp(\sum_{i \in \BN} y_i \LV_i)$ we have the same theorem hold:
	\begin{prop}
		$Y_N = C_N w + D_N$, where $C_N,~D_N$ are some polynomials in indeterminates $y_i$.
	\end{prop}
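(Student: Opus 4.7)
The proposition is the mirror-image of Theorem \ref{zlinearity}: it exchanges $\LV_{-k}$ for $\LV_k$, $a_{-k}$ for $a_k$, and $z$ for $w$. Accordingly, my plan is not to redo the combinatorics from scratch but to deduce the result from Theorem \ref{zlinearity} using the adjoint symmetry between $\LV_k$ and $\LV_{-k}$.

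The first step is to rewrite the matrix element of interest using the adjoint relation
$$
\triqet{\emptyset}{\exp\Bigl(\sum_k y_k \LV_k\Bigr)}{\lambda} = \triqet{\lambda}{\exp\Bigl(\sum_k y_k \LV_k^*\Bigr)}{\emptyset}.
$$
Since $a_m^* = a_{-m}$ with respect to the standard inner product, I then compute $\LV_k^*$ directly from the defining formula $\LV_k = i\beta k a_k + \tfrac{1}{2}\sum_j :a_j a_{k-j}:$. The normally-ordered sum transforms into $\tfrac{1}{2}\sum_j :a_j a_{-k-j}:$ after reindexing $j \mapsto -j$, and the Heisenberg shift contributes $-i\bar\beta k\, a_{-k}$. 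Comparing with $\LV_{-k}(\alpha,\beta)$ shows $\LV_k^*(\alpha,\beta) = \LV_{-k}(\alpha,\bar\beta)$. Under the parametrization $\alpha = (z+w)r/2$, $\beta = (z-w)/(2i)$, conjugating $\beta$ swaps $z$ with $w$.

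Therefore $\triqet{\emptyset}{\exp(\sum y_k \LV_k)}{\lambda}$ equals exactly the quantity to which Theorem \ref{zlinearity} applies, but with the roles of $z$ and $w$ exchanged. Invoking that theorem produces a sequence $\{Y_N\}$ satisfying $Y_N = A_N z' + B_N$ in the swapped variables, i.e.\ $Y_N = C_N w + D_N$ after reversing the swap, with $C_N = A_N$ and $D_N = B_N$ polynomials in $\{y_i\}$ (and independent of $w$).

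The only delicate point, which is where I would concentrate care, is verifying that the inner-product identification is compatible with the ``Schur process'' side: one must check that replacing $\exp(\sum Y_k a_k)\qet{\lambda}$ by its adjoint-paired object $\exp(\sum Y_k a_{-k})\vacuum$ lands one in exactly the same setting as in the proof of Theorem \ref{zlinearity}. If there is any concern about this bookkeeping, the fall-back is to run the inductive argument of Theorem \ref{zlinearity} verbatim on $\triqet{\emptyset}{\exp(\sum y_k \LV_k)}{\lambda}$, using the formula $\LV_k \qetl = \sum_{\mu = \lambda - \text{rim-hook}}(-1)^{\mathrm{height}-1}(w + \Start + k/2)\qet{\mu}$ and the Jacobi--Trudy structure for $v_\lambda^\vee := \triqet{\emptyset}{\exp(\sum y_k \LV_k)}{\lambda}$; every differentiation-with-respect-to-$w$ calculation then mirrors the $z$-calculation, and the induction closes identically.
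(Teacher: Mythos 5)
Your proposal is correct, and your ``fall-back'' is in fact exactly the paper's own proof: the paper simply writes $\triqet{\emptyset}{\exp(\sum_k y_k \LV_{k})}{\lambda}$ as the same sum of path polynomials (with $w$ playing the role of $z$) and declares that the induction of Theorem \ref{zlinearity} goes through verbatim. Your primary route is genuinely different and arguably cleaner: by passing to the adjoint you turn the proposition into a \emph{literal} instance of Theorem \ref{zlinearity} with $z$ and $w$ exchanged, rather than a re-run of its argument, and this makes precise the paper's unargued claim that ``the proof is the same.'' One bookkeeping correction is needed, though. The pairing $\braqet{\lambda}{\mu}$ in this paper is bilinear (no conjugation), so the relevant operation is the transpose: $a_k^{*}=a_{-k}$ and $(i\beta k\.a_k)^{*}=i\beta k\.a_{-k}=i(-\beta)(-k)\.a_{-k}$, whence $\LV_k(\alpha,\beta)^{*}=\LV_{-k}(\alpha,-\beta)$, not $\LV_{-k}(\alpha,\bar\beta)$. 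Since $\beta=\frac{z-w}{2i}$, negating $\beta$ is precisely the swap $z\leftrightarrow w$, which is what you want; conjugating $\beta$ would instead produce $\bar w,\bar z$ and would fail for non-real parameters. With that fix your reduction closes, and the ``delicate point'' you flag about the Schur-process side is handled by the same transpose, since $\exp(\sum_k Y_k a_k)$ is carried to $\exp(\sum_k Y_k a_{-k})$ under it, landing you exactly in the setting of Theorem \ref{zlinearity}.
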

	\begin{proof}
		We have
		\begin{equation}
		\triqet{\emptyset}{\exp(y_k \LV_{k})}{\lambda} = \sum_{\sum i k_i = N} \frac{1}{(\sum k_i)!} \prod y_i^{k_i} \sum_{\sigma \in S_N / \mathrm{Stab}} \sigma(W_z(k_1, ... k_R))
		\end{equation}
		so the proof is the same as the proof of Theorem \ref{zlinearity}.
	\end{proof}
	\section{M-Virasoro process}
	Finally, we would like to say some words about some generalizations of Kerov operators construction. 
	\begin{definition}
		{\it M-Virasoro operators} are operators defined by
		\begin{equation}
		\LV^{(M)}_k = i\beta k a_k + \frac{1}{M!}\sum_{\sum k_i = k} |Stab(k_1, ... k_M)| : a_{k_1} a_{k_2} ... a_{k_M} :
		\end{equation}
		where $Stab(k_1, ... k_M) \subset S_M$ is the stabilizer of $\{k_i\}$ defined above.
	\end{definition}
	
	We use this stabilizer to count every trajectory once.
	
	\begin{center}
		\begin{tikzpicture}[fill=black]
		\path (-1, 0) node(x) {...}
		(0,0) node(a) [circle,draw,fill] {}
		(1,0) node(b) [circle,draw,fill] {}
		(2,0) node(c) [circle,draw] {}
		(3,0) node(d) [circle,draw,fill] {}
		(4,0) node(e) [circle,draw] {}
		(5,0) node(f) [circle,draw,fill] {}
		(6,0) node(g) [circle,draw,fill] {}
		(7,0) node(h) [circle,draw] {}
		(8,0) node(i) [circle,draw] {}
		(9,0) node(j) [circle,draw,fill] {}
		(10,0) node(k) [circle,draw] {}
		(11,0) node(y) {...};
		\draw[thick] (x) -- (a) -- (b) -- (c) -- (d) -- (e) -- (f) -- (g) -- (h) -- (i) -- (j) -- (k) -- (y);
		\draw[thick,blue, ->] (b) .. controls (2.5, 1) .. (e);
		\draw[thick,blue, ->] (e) .. controls (4.5, 0.5) .. (f);
		\draw[thick,blue, ->] (f) .. controls (2.5, -0.5) .. (a);
		\draw[thick,blue ,->] (a) .. controls (4.5, -1.25) .. (i);
		\end{tikzpicture}
		
		\textsc{Figure 6: Example of trajectory}
		\label{fig6}
	\end{center}
	
	In some way M-Virasoro operators are similar to basic Virasoro operators: one can see that M-Virasoro operator $\LV^{(M)}_{-k}$ can shift only one particle by $k$ positions rightwards and $\LV^{(M)}_{k}$ can shift only one particle by $k$ positions leftwards. Otherwise we have at least two particle trajectories and we can swap these trajectories ends to get this very summand with the other sign just as we had before for Virasoro operators. The example of this swapping is shown below.
	\begin{center}
		\begin{tikzpicture}[fill=black]
		\path (-1, 0) node(x) {...}
		(0,0) node(a) [circle,draw,fill] {}
		(1,0) node(b) [circle,draw,fill] {}
		(2,0) node(c) [circle,draw] {}
		(3,0) node(d) [circle,draw,fill] {}
		(4,0) node(e) [circle,draw] {}
		(5,0) node(f) [circle,draw,fill] {}
		(6,0) node(g) [circle,draw,fill] {}
		(7,0) node(h) [circle,draw] {}
		(8,0) node(i) [circle,draw] {}
		(9,0) node(j) [circle,draw,fill] {}
		(10,0) node(k) [circle,draw] {}
		(11,0) node(y) {...};
		\draw[thick] (x) -- (a) -- (b) -- (c) -- (d) -- (e) -- (f) -- (g) -- (h) -- (i) -- (j) -- (k) -- (y);
		\draw[thick,blue, ->] (b) .. controls (2.5, 1) .. (e);
		\draw[thick,blue, ->] (e) .. controls (4.5, 0.5) .. (f);
		\draw[thick,blue, ->] (f) .. controls (2.5, 0.5) .. (a);
		\draw[thick,blue ,->] (a) .. controls (4.5, 1.25) .. (i);
		\draw[thick,blue ,->] (d) .. controls (4.5, -1.25) .. (g);
		\draw[thick,blue ,->] (g) .. controls (8.5, -1.25) .. (k);
		\draw[thick,blue ,->] (k) .. controls (9.5, -1.25) .. (j);
		\draw[thick,blue ,->] (j) .. controls (8, -1.25) .. (h);
		\end{tikzpicture}
		
		\textsc{Figure 7: Two different trajectories}
		\label{fig7}
	\end{center}
	\begin{center}
		\begin{tikzpicture}[fill=black]
		\path (-1, 0) node(x) {...}
		(0,0) node(a) [circle,draw,fill] {}
		(1,0) node(b) [circle,draw,fill] {}
		(2,0) node(c) [circle,draw] {}
		(3,0) node(d) [circle,draw,fill] {}
		(4,0) node(e) [circle,draw] {}
		(5,0) node(f) [circle,draw,fill] {}
		(6,0) node(g) [circle,draw,fill] {}
		(7,0) node(h) [circle,draw] {}
		(8,0) node(i) [circle,draw] {}
		(9,0) node(j) [circle,draw,fill] {}
		(10,0) node(k) [circle,draw] {}
		(11,0) node(y) {...};
		\draw[thick] (x) -- (a) -- (b) -- (c) -- (d) -- (e) -- (f) -- (g) -- (h) -- (i) -- (j) -- (k) -- (y);
		\draw[thick,blue, ->] (b) .. controls (2.5, 1) .. (e);
		\draw[thick,blue, ->] (e) .. controls (4.5, 0.5) .. (f);
		\draw[thick,blue, ->] (f) .. controls (2.5, 0.5) .. (a);
		\draw[thick,red ,->] (a) .. controls (4.5, 1.25) .. (h);
		\draw[thick,blue ,->] (d) .. controls (4.5, -1.25) .. (g);
		\draw[thick,blue ,->] (g) .. controls (8.5, -1.25) .. (k);
		\draw[thick,blue ,->] (k) .. controls (9.5, -1.25) .. (j);
		\draw[thick,red ,->] (j) .. controls (8.5, -1) .. (i);
		\end{tikzpicture}
		
		\textsc{Figure 8: Swapping the endings of trajectories}
		\label{fig8}
	\end{center}
	Then one can obatain
	\begin{equation}
	L^{(M)}_k \qetl = \sum_{\mu = \lambda + \mathrm{Rim-hook}} (-1)^{height - 1} (z + c(\LBox) + \frac{k-1}{2})^{M-1} \qet{\mu}
	\end{equation}
	because every trajectory is counted once, every trajectory contains some particles shifted alongside it and trajectories with $r$ particles are counted with coefficient $(-1)^r \binom{M-1}{r,r_0,M-1-r-r_0}z^{r_0}(\particles)^r(\holes)^{M-1-r-r_0}$, where $\particles$ is number of particles after $X$ and $\holes$ is the number of holes before $X+k$. 
	
	An analogue of proposition \ref{psiL} holds for M-Virasoro operators:
	\begin{prop}
		On $\MY$ the following holds
		\begin{equation}
		[\psi_x, \LV^{(M)}_{-k}] = (\sum_{r = 1}^{M-1} \LV^{(M-r)})\psi_x + (z + x + \frac{k}{2} - 1)^{M-1}\psi_{x+k}
		\end{equation}
	\end{prop}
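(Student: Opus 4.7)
The plan is to follow the proof of Proposition~\ref{psiL}, now using the $(M-1)$-th power formula for $\LV^{(M)}_{-k}$ derived in Section~6. By the trajectory-swap argument (Figures~7, 8), only single-particle shifts contribute, so
\[
\LV^{(M)}_{-k}\,w \;=\; \sum_{X\in\BHZ}\bigl(z + \holes_X - \particles_{X+k} + \tfrac{k-1}{2}\bigr)^{M-1}\psi_{X+k}\psi^{*}_X\,w,
\]
where the counts $\holes_X$ and $\particles_{X+k}$ refer to the state $w$. This explicit realisation is the starting point for the commutator computation.

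As in the $M=2$ case, $[\psi_x,\LV^{(M)}_{-k}]w$ splits into two contributions. When $X=x$, only the summand where $x$ is a hole of $w$ survives, and the anticommutation $\{\psi_x,\psi^{*}_x\}=1$ together with evaluation of the coefficient at $X=x$ immediately yields the second term $(z+x+\tfrac{k}{2}-1)^{M-1}\psi_{x+k}$ of the claimed identity. When $X\ne x$, the shift $\psi_{X+k}\psi^{*}_X$ commutes with $\psi_x$, but the counts change by an amount $\varepsilon_X=\varepsilon_X(x)\in\{-1,0,1\}$ depending on whether $x$ lies to the left of $X$, inside $(X,X+k)$, or to the right of $X+k$. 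Writing $A_X:=z+\holes_X-\particles_{X+k}+\tfrac{k-1}{2}$, the binomial expansion
\[
(A_X+\varepsilon_X)^{M-1}-A_X^{M-1}=\sum_{r=1}^{M-1}\binom{M-1}{r}\varepsilon_X^{\,r}A_X^{M-1-r}
\]
reduces this contribution to operators of the form $\sum_X\varepsilon_X^{\,r}A_X^{M-1-r}\psi_{X+k}\psi^{*}_X$ applied to $\psi_x w$.

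The last step is to identify each of these $r$th-order operators with $\LV^{(M-r)}_{-k}$. I would run the multinomial collapse used in Section~6 in reverse: the weight $\varepsilon_X^{\,r}$ records the selection of $r$ of the Heisenberg factors $a_{k_i}$ in the normally-ordered product $:a_{k_1}\cdots a_{k_{M-r}}:$ to absorb the counts altered by inserting $\psi_x$, while the remaining $M-1-r$ factors contribute $A_X^{M-1-r}$. Summing over $r$ produces $\sum_{r=1}^{M-1}\LV^{(M-r)}_{-k}\,\psi_x$, completing the identity.

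\emph{Main obstacle.} The delicate point is this last identification: one must verify that the binomial coefficients $\binom{M-1}{r}$ from the expansion match the normalisation $\frac{1}{(M-r)!}|\mathrm{Stab}(k_1,\ldots,k_{M-r})|$ appearing in the definition of $\LV^{(M-r)}_{-k}$, with signs and $\varepsilon_X$-weights conspiring so that no residual normalisation is introduced. This is essentially the inverse of the combinatorial collapse that produced the clean $(M-1)$-th power formula for $\LV^{(M)}_{-k}$, and it is the only non-routine part of the argument.
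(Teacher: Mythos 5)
Your proposal follows essentially the same route as the paper's own proof: realise $\LV^{(M)}_{-k}$ as a sum of single-particle shifts $\psi_{X+k}\psi^*_X$ weighted by the $(M-1)$-st power of the linear count $A_X$, split the commutator into the $X=x$ and $X\neq x$ contributions, read off $(z+x+\tfrac{k}{2}-1)^{M-1}\psi_{x+k}$ from the diagonal term, and expand the change in the $(M-1)$-st power caused by inserting a particle at $x$ so that the lower-degree terms are reabsorbed into lower-order M-Virasoro operators acting after $\psi_x$. The paper's proof is exactly this scheme, compressed into a single displayed identity plus the phrase ``same way as Proposition~\ref{psiL}''; your version is the more explicit of the two.

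The obstacle you flag at the end is genuine, and you are right not to wave it away. The binomial coefficients $\binom{M-1}{r}$ and the weights $\varepsilon_X^{\,r}$ do not obviously reassemble into $\sum_{r=1}^{M-1}\LV^{(M-r)}_{-k}$ with unit coefficients, and the normalisation $\frac{1}{(M-r)!}\,|\mathrm{Stab}|$ in the definition of $\LV^{(M-r)}_{-k}$ must be matched term by term. The paper disposes of this by asserting
\[
\Bigl(z+x+\tfrac{k}{2}\Bigr)^{M-1}-\Bigl(z+x+\tfrac{k}{2}-1\Bigr)^{M-1}=\sum_{r=1}^{M-1}\Bigl(z+x+\tfrac{k}{2}-1\Bigr)^{M-r},
\]
which is not a valid polynomial identity: writing $a=z+x+\tfrac{k}{2}-1$, already for $M=3$ the left side is $2a+1$ while the right side is $a^2+a$. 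So the step you single out as the only non-routine part of the argument is precisely the step at which the paper's own proof is defective; your binomial expansion with the explicit $\binom{M-1}{r}$ is the correct starting point, and carrying out the identification (or, if it fails, adjusting the coefficients in the statement of the proposition) is real work that neither your sketch nor the paper actually completes.
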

	\begin{proof}
		We prove it the same way as we proved Proposition \ref{psiL} but we put there
		\begin{equation}
		(z + x + \frac{k}{2})^{M-1} - (z + x + \frac{k}{2} - 1)^{M-1} = \sum_{r = 1}^{M-1} (z + x + \frac{k}{2} - 1)^{M-r}
		\end{equation}
		and therefore we can consider that coefficient $(z + x + \frac{k}{2} - 1)^{M-r}$ appears from $\LV^{(M-r)}_k$ action.
	\end{proof}
	
	\begin{definition}
		{\it M-Virasoro measure} is the measure on Young diagrams defined as
		\begin{equation}
		\Vir^{(M)}(\lambda) = \frac{1}{Z} \triqet{\lambda}{\exp(\sum_{k \in \BN}x_k \LV^{(M)}_{-k})}{\emptyset}\triqet{\emptyset}{\exp(\sum_{k \in \BN}y_k \LV^{(M)}_{k})}{\lambda}
		\end{equation} 
	\end{definition}
	
	When $M = 1$ then M-Virasoro process is just the Schur process, and if $M = 2$ then M-Virasoro process coincides with Virasoro process. For M-Virasoro process Theorem \ref{determinant} is correct because of the same reasons. However, the calculations of correlation functions are tough and we'll complete them elsewhere.
	\section{z-meausures for classical Lie algebras}
	We have considered the measures of kind
	\begin{equation}
	\FM(g_1,g_2, v) = \frac{1}{Z}\triqet{v_0}{g_1}{v}\triqet{v}{g_2}{v_0}
	\end{equation}
	where $g_1 \in \exp(\g_1),~ g_2 \in \exp(\g_2)$ are the elememts of two different Lie groups, $v, v_0$ are from some representation space common for $\g_1$ and $\g_2$, where $v_0$ is the cyclic vector. Schur measures are constructed for $\g_1 = \mathrm{Heis}_+ = \vspan{\{a_i\}_{i \in \BN}},~ \g_2 = \mathrm{Heis}_- = \vspan{\{a_{-i}\}_{i \in \BN}}$ and for Virasoro measures we have $\g_1 = \Vir_+ = \vspan{\{L_i\}_{i \in \BN}},~ \g_2 = \Vir_- = \vspan{\{L_{-i}\}_{i \in \BN}}$. The representation space there is $\Lambda^{\frac{\infty}{2}}V$ and $v_0 = \vacuum$ is the cyclic vector. Note that $\exp(\Vir_+)$ and $\exp(\Vir_-)$ aren't embeddable in a single Lie group.
	
	Let us now consider $\g = \n_- \oplus \h \oplus n_+$ as the classical Lie algebra with $\n_-$ and $\n_+$ are Borel subalgebras and $\h$ as a Cartan algebras. We'll consider z-measure for $\g$ this way:
	\begin{equation}
	\FM(n_+,n_-, v) = \frac{1}{Z}\triqet{v_0}{n_+}{v}\triqet{v}{n_-}{v_0}, ~~n_{\pm} \in \n_{\pm}
	\end{equation}
	Firstly, we consider the case $\g = \sl_{n+1}$ of $A_n$ type.
	\section{Acknowledgements}
	I would like to thank Evgeny Feigin for useful remarks and discussions. I also would like to thank Alexey Barsukov and Mikhail Artemyev. 
 

\hbadness=1100

\end{document}